\documentclass[hidelinks,onefignum,onetabnum]{siamart251216}

\ifpdf
\hypersetup{
pdftitle={A black-box, multilevel algebraic preconditioning framework for conforming finite elements},
pdfauthor={O. A. Krzysik, B. S. Southworth, and G. A. Wimmer}
}
\fi

\usepackage{amsmath,amssymb,amsfonts}
\usepackage{mathtools}
\mathtoolsset{centercolon}

\usepackage{bm}
\usepackage{xcolor}
\usepackage{graphicx}
\usepackage{epstopdf}
\usepackage{algorithmic}
\usepackage{tikz}
\usepackage{subcaption}
\usepackage{array}
\usepackage{comment}
\usepackage[normalem]{ulem}

\usetikzlibrary{calc,positioning,decorations.pathreplacing,arrows.meta,patterns,fit,matrix}

\ifpdf
\DeclareGraphicsExtensions{.pdf,.png,.jpg,.eps}
\else
\DeclareGraphicsExtensions{.eps}
\fi

\headers{Algebraic multilevel solvers for conforming finite elements}
{O. A. Krzysik, B. S. Southworth, and G. A. Wimmer}

\title{A black-box, multilevel algebraic preconditioning framework for conforming finite elements%
\thanks{\funding{This work was supported by the Laboratory Directed Research and Development program of Los Alamos National Laboratory, under project number 20240261ER. Los Alamos National Laboratory report number LA-UR-26-25292.}}}

\author{
Oliver A. Krzysik\thanks{Theoretical Division, Los Alamos National Laboratory
(\email{okrzysik@lanl.gov}, \url{https://orcid.org/0000-0001-7880-6512}.}
\and
Ben S. Southworth\thanks{Theoretical Division, Los Alamos National Laboratory
(\email{southworth@lanl.gov}, \url{https://orcid.org/0000-0002-0283-4928}.}
\and
Golo A. Wimmer\thanks{Theoretical Division, Los Alamos National Laboratory
(\email{gwimmer@lanl.gov}, \url{https://orcid.org/0000-0002-7871-1748}.}
}

\newsiamthm{remark}{Remark} %
\newsiamremark{hypothesis}{Hypothesis}
\newsiamremark{assumption}{Assumption}
\newsiamremark{fact}{Fact}
\newsiamthm{claim}{Claim}

\crefname{hypothesis}{Hypothesis}{Hypotheses}
\Crefname{hypothesis}{Hypothesis}{Hypotheses}
\crefname{assumption}{Assumption}{Assumptions}
\Crefname{assumption}{Assumption}{Assumptions}
\crefname{fact}{Fact}{Facts}
\Crefname{fact}{Fact}{Facts}
\crefname{claim}{Claim}{Claims}
\Crefname{claim}{Claim}{Claims}

\crefname{section}{Section}{Sections}
\Crefname{section}{Section}{Sections}
\crefname{subsection}{Section}{Sections}
\Crefname{subsection}{Section}{Sections}

\newcolumntype{P}[1]{>{\raggedright\arraybackslash}p{#1}}

\allowdisplaybreaks

\newcommand{\hcurl}{H(\operatorname{curl})}
\newcommand{\hdiv}{H(\operatorname{div})}

\newcommand{\wh}[1]{\widehat{#1}}
\newcommand{\wt}[1]{\widetilde{#1}}

\DeclareFontFamily{U}{mathx}{}
\DeclareFontShape{U}{mathx}{m}{n}{<-> mathx10}{}
\DeclareSymbolFont{mathx}{U}{mathx}{m}{n}
\DeclareMathAccent{\wc}{0}{mathx}{"71}
\DeclareMathAccent{\wbar}{0}{mathx}{"73}

\DeclareMathOperator{\rank}{rank}

\DeclareMathOperator{\grad}{grad}

\let\div\relax
\DeclareMathOperator{\div}{div}

\begin{document}
\maketitle

\begin{abstract}
Recently we introduced the least-squares algebraic-multigrid domain-decomposition (LS-AMG-DD) method as a multilevel, algebraic preconditioner for sparse symmetric positive definite (SPD) matrices that admit a Gram representation \(A=G^{\top}G\) \cite{southworth2026lsamgdd}. 
The factor \(G\) induces a local symmetric positive semidefinite (SPSD) splitting of \(A\) used to define local spectral problems from which an interpolation $P$ is built, and a coarse-level Gram operator induced under Galerkin coarsening, \(A_c=G_c^\top G_c\), for \(G_c:=GP\).
This paper clarifies when this Gram structure arises, showing that, on a prescribed degree-of-freedom cover \({\cal C}\), a \({\cal C}\)-local Gram representation of $A$ exists if and only if \(A\) admits a \({\cal C}\)-local SPSD splitting. 
We then connect this viewpoint to conforming finite-element discretizations, where bilinear forms are naturally assembled from elementwise SPSD energies and therefore admit element-local Gram representations after choosing local factors (e.g., via algebraic factorizations of element blocks).
Taken together, these observations provide an essentially black-box route for applying LS-AMG-DD to conforming finite-element problems. Numerical tests illustrate the robustness of the method on several problems for which classical AMG methods require more than $10^5$ iterations to converge, including high-order discretizations of grad--div in \(\hdiv\), anisotropic hyperdiffusion in $H^2$, and linear elasticity in vector \(H^1\). Moreover, in some comparisons with existing AMG methods, LS-AMG-DD produces errors that are 2--5 orders of magnitude smaller, even when all methods are stopped at the same relative residual tolerance.
\end{abstract}

\begin{keywords}
least squares, Gram matrix, algebraic multigrid, domain decomposition, spectral coarse spaces, conforming finite elements
\end{keywords}

\begin{MSCcodes}
65F08, %
65N30, %
65N55 %
\end{MSCcodes}

\section{Introduction}

Robust algebraic solvers for large sparse symmetric positive definite (SPD) systems remain a central need in scientific computing. In real application workflows, often an algebraic solver is necessary, as a user or application code may be unable or unwilling to provide more detailed information, such as a full geometric mesh hierarchy or a specific physical problem formulation a priori. Moreover, robustness is critical to ensure that a full multiphysics simulation does not fail due to sensitivity or lack of robustness of a single inner linear solver. For scalar elliptic problems posed in $H^1$, algebraic multigrid (AMG) is often the method of choice in high-performance codes, and classical AMG and smoothed aggregation can deliver near-optimal complexity on isotropic, mildly anisotropic, and heterogeneous problems \cite{xu2017amg,vanek1996sa}. However, even restricted to SPD systems that arise in the context of numerical partial differential equations (PDEs), AMG can quickly fail when confronted with complexities such as coupled systems of PDEs, finite-element spaces aside from $H^1$, strong anisotropy and directional behavior, or high-order discretizations. 

These challenges all relate to standard assumptions underlying many AMG methods around near-nullspace modes, i.e., modes $\bm e$ for which $A \bm e \approx \bm 0$. Under typical pointwise smoothing, these are the slowest modes to converge, and also the modes that the coarse-grid correction is required to capture for a successful two-level method. First, it is typically assumed that near-nullspace modes that are slow to converge under the smoother vary slowly in the direction of strong connections of the matrix. Equivalently, the modes that need to be represented in interpolation can be identified with locally smooth vectors across strong matrix connections. Second, it is assumed that the large near nullspace that the range of interpolation must contain can be represented by a direct sum over local coarse basis functions. However, these assumptions can break down. For example, in highly anisotropic diffusion there are near-nullspace modes that are oscillatory in the direction of weak diffusion \cite{wimmer2024fast}, which do not necessarily correspond to slow variation in the direction of strong matrix connections, and also cannot be attenuated by local smoothing \cite{krzysik2026overlapping}. Similarly, classical grad--div and Maxwell-type problems posed in $\hdiv$ and $\hcurl$ have large near nullspaces that cannot be represented by a direct sum of local basis functions. 

Many specialized AMG methods, and more broadly multigrid-like methods, have been developed to handle these various challenges. Some work has developed black-box AMG methods for systems of PDEs, e.g. \cite{clees2005amg,magri2025multigrid}, but coupled systems are still typically handled with block preconditioning, with multigrid used to approximate inverses within the block preconditioner. For extreme anisotropy in fusion, specialized techniques have been developed, e.g. \cite{green2024efficient,wimmer2024fast}, and certain other problems like elasticity have known physical near-nullspace modes that interpolation can be constrained to fit exactly, such that they can be represented accurately on coarse levels, e.g. \cite{baker2010elasticity}. In the geometric multigrid context of $\hdiv$ and $\hcurl$, block smoothers are used that exploit exact-sequence and/or mesh-entity structure \cite{arnold2000hdivhcurlmg,hiptmair1997hdivmg,hiptmair1998maxwellmg}. In the algebraic setting, the auxiliary-space framework of Hiptmair--Xu \cite{hiptmair2007nodal} has led to the development of effective $\hdiv$ and $\hcurl$ solvers, in the form of ADS and AMS, respectively \cite{kolev2009ams,kolev2012ads}.
A related AMG construction based on the preservation of discrete nullspaces in the $\hcurl$ context was developed by Reitzinger and Sch{\"o}berl \cite{reitzinger2002algebraic}, and has since been improved through the use of smoothed aggregation \cite{bochev2003improved,hu2006toward,tuminaro2025structure}.
Other approaches include algebraic hybridization \cite{dobrev2019algebraic}, and ongoing work on more direct AMG-style constructions \cite{shen2026nodal}. Variants in $p$-multigrid have been studied since the late 1980s \cite{ronquist1987spectral}, and have seen renewed interest lately in high-order finite elements, e.g. \cite{pazner2020efficient}. The aforementioned works provide powerful tools that are often effective, but, in general, they rely on nontrivial discretization-aware data, are somewhat problem/difficulty specific, and their implementation burden is higher.

Another potential route towards robust algebraic solvers is via spectral coarse spaces, which are built from local low-energy modes that arise as solutions of carefully designed local generalized eigenvalue problems. 
This principle appears in overlapping Schwarz coarse spaces (including GenEO-type constructions) \cite{spillane2014geneo}, algebraic domain decomposition methods based on local SPSD splittings \cite{aldaas2019coarsespaces,aldaas2021multilevel,aldaas2022normaleq}, element-based AMG \cite{brezina2001amge,chartier2003ramge}, and generalized multiscale finite-element methods \cite{efendiev2013gmsfem}. 
Although these methods differ in viewpoint and implementation, they share a common underlying philosophy \cite{southworth2026lsamgdd}. 
A limiting aspect of these methods is that most are naturally two-level because the structure exploited to coarsen the finest level does not propagate to the second level.

Our recently developed least-squares algebraic-multigrid domain-decomposition (LS-AMG-DD) solver \cite{southworth2026lsamgdd} overcomes the two-level limitation of the aforementioned methods in the setting of sparse Gram operators $A=G^{\top}G$, as arise in least-squares problems. 
The Gram factor $G$ provides a mechanism for building an overlap-local SPSD splitting of $A$, and hence local spectral problems, and, importantly, is naturally multilevel, since the Gram structure propagates under Galerkin coarsening, $A_c= G_c^\top G_c$, with $G_c := GP$ for interpolation $P$. 
The resulting method combines aggregation-based coarsening, overlapping Schwarz smoothing, and local spectral enrichment in a fully algebraic multilevel construction \cite{southworth2026lsamgdd}. At a high level, the objective of LS-AMG-DD is to provide the robustness of spectral DD methods, as well as the efficiency, scalability, and low operator complexities associated with AMG (where feasible to do so).
In our companion paper \cite{krzysik2026theory}, we develop a fully algebraic two-level convergence theory for LS-AMG-DD, showing that the coarse space satisfies a multigrid weak approximation property in a block smoother norm, and that the convergence rate of the method is essentially controlled by the (user-specified) local spectral cutoff threshold, and the spectral radius of the preconditioned smoothing iteration.

This paper presents a novel framework for solving conforming finite-element problems by revealing that the Gram structure required by LS-AMG-DD \cite{southworth2026lsamgdd} arises naturally in standard conforming finite-element assembly. The key observation is that standard conforming finite-element discretizations are built by assembling local semicoercive quadratic energies on elements. This immediately provides element-local Gram representations, which naturally induce a global Gram structure.
From a ``black-box'' perspective, our framework simply requires the matrix $G$ from the  representation $A=G^{\top}G$, which we show how to construct in this paper using basic finite-element assembly information. 
In particular, this framework does \emph{not} require discretization- or problem-specific structures such as element-agglomeration data used by AMGe \cite{brezina2001amge,chartier2003ramge}, or auxiliary exact-sequence information as used by AMS/ADS \cite{kolev2009ams,kolev2012ads}, or nullspace vectors as commonly required by AMG methods for PDE systems \cite{vanvek1996algebraic,baker2010elasticity}, or general information on the PDE and/or finite-element spaces. Although we use the LS-AMG-DD solver throughout this paper, many of the ideas presented are not specific to this solver, and could be used to motivate new solvers in future work. More broadly, we show that conforming finite-element assembly can be viewed as a source of local Gram structure, which is trivially preserved under algebraic coarsening, and we also show that algebraically defined overlapping subdomains induced from this Gram structure have a direct geometric interpretation as closures of finite-element mesh entities.

The remainder of this paper is organized as follows.
\Cref{sec:background} presents relevant background details on the LS-AMG-DD algorithm.
\Cref{sec:local_gram_existence} proves an abstract equivalence between local Gram representations and exact local SPSD splittings. Background on conforming finite-element problems is provided in \Cref{sec:conforming_discr}, and elementwise Gram constructions for conforming discretizations is presented in
\Cref{sec:elementwise_gram_formulation}. In \Cref{sec:numerics}, we demonstrate LS-AMG-DD to be a robust black-box AMG solver for three challenging conforming finite-element problems that classical AMG methods fail to converge on within $10^5$ iterations: grad-div posed in $\hdiv$, anisotropic biharmonic in $H^2$, and vector elasticity, with all problems considered across multiple finite-element orders and physical parameters (reaction coefficient, anisotropy ratio, and Poisson ratio, respectively). For each problem, LS-AMG-DD not only reaches relative residual tolerances substantially faster than existing AMG methods, but at a fixed relative residual tolerance also provides as much as five orders of magnitude smaller error, showing the importance of a robust preconditioner that can uniformly attenuate all error modes. 
Conclusions are discussed in \cref{sec:conclusions}.

\section{Background on LS-AMG-DD}
\label{sec:background}

LS-AMG-DD is a fully algebraic multilevel preconditioner for sparse SPD
operators admitting a Gram representation \(A=G^{\top}G\)
\cite{southworth2026lsamgdd}.  We describe the two-level construction first;
the same construction is applied recursively on coarse levels because the
Gram structure is preserved by Galerkin coarsening, as described below.
Let \(A\in\mathbb R^{n\times n}\) be SPD and let
\(G\in\mathbb R^{m\times n}\), with \(m\ge n\), satisfy \(A=G^{\top}G\).
Here \(n\) is the number of degrees of freedom (DOFs), while \(m\) is the
number of equations represented by the rows of \(G\).  Since \(A\) is SPD,
\(G\) has full column rank. We consider vectors as column vectors and let $G$ take the form
\begin{equation}
\label{eq:G_row_convention}
    G
    =
    \begin{bmatrix}
        \bm g_1^{\top} \\
        \vdots \\
        \bm g_m^{\top}
    \end{bmatrix},
    \qquad
    G^{\top}
    =
    \begin{bmatrix}
        \bm g_1 & \cdots & \bm g_m
    \end{bmatrix},
    \qquad
    \bm g_j\in\mathbb R^n .
\end{equation}
Thus \(A=G^{\top}G=\sum_{j=1}^m\bm g_j\bm g_j^{\top}\), so each row of
\(G\) contributes one SPSD rank-one row energy to the global operator.

The two-level LS-AMG-DD cycle has the standard multiplicative error
propagator
\[
    E_{\rm TG}
    =
    \bigl(I-M^{-\top}A\bigr)
    \bigl(I-PA_c^{-1}P^{\top}A\bigr)
    \bigl(I-M^{-1}A\bigr),
    \quad
    A_c:=P^{\top}AP = (G P)^\top (G P).
\]
Here \(P\in\mathbb R^{n\times n_c}\) is the interpolation operator described
below, and \(M^{-1}\) is a block Schwarz smoother.
Standard sharp two-level theory \cite{falgout2005twogrid} gives 
\(
\|E_{\rm TG}\|_A = 1 - 1/K_{\rm TG},
\)
where \(K_{\rm TG}\) is the weak-approximation constant measured in the norm induced by the symmetrized smoother \(\wt M\). Further, \(K_{\rm TG}\) bounds the condition number of the associated preconditioned operator $B_{\rm TG}^{-1} A$, defined such that $E_{\rm TG} = I - B_{\rm TG}^{-1} A$.
In \cite{krzysik2026theory} we show that, for any convergent smoother \(M^{-1}\),
\(
K_{\rm TG}
\le
\lambda_{\max}(M_{\omega{\rm J}}^{-1}\wt M)\tau_{\max}
\),
with \(\tau_{\max}\) the largest realized local eigenvalue cutoff (described below).
Here, \(\lambda_{\max}(M_{\omega{\rm J}}^{-1}\wt M)\) represents the cost of transferring the approximation property from an aggregate-wise block-Jacobi norm, for aggregates $\{ \omega_i \}$ defined below, to the symmetrized smoother norm.
The original LS-AMG-DD in \cite{southworth2026lsamgdd} used a restricted additive Schwarz (RAS) smoother, but this choice is not intrinsic to the coarse-space construction of the method.
In particular, one can use any block-style Schwarz smoother such as overlapping multiplicative Schwarz, or block Jacobi; moreover, RAS smoothing is not always well defined, and can produce a divergent two-level method \cite{krzysik2026theory}. 

The DOFs \(\{1,\ldots,n\}\) are partitioned into disjoint aggregates
\begin{equation}
\label{eq:agg_def}
    \{\omega_i\}_{i=1}^{n_{\rm agg}},
    \qquad
    \bigcup_{i=1}^{n_{\rm agg}}\omega_i=\{1,\ldots,n\},
    \qquad
    \omega_i\cap\omega_k=\emptyset
    \quad\text{for }i\ne k .
\end{equation}
In practice we typically do this partitioning by applying a standard AMG aggregation routine to $A$.
For a DOF set \(X\subseteq\{1,\ldots,n\}\), we write
\(R_X:=I_n(X,:)\) for the Boolean restriction to the coordinates indexed by
\(X\).  For a row \(\bm g_j^{\top}\) of $G \in \mathbb{R}^{m \times n}$ in \eqref{eq:G_row_convention}, define its DOF support by
\(\operatorname{supp}(\bm g_j):=\{k:(\bm g_j)_k\ne0\}\).  Given any
\(\omega\subseteq\{1,\ldots,n\}\), define its row-touching set by
\begin{equation}
\label{eq:calR_def}
    \mathfrak R_{\omega}
    :=
    \left\{
        j\in\{1,\ldots,m\}:
        \operatorname{supp}(\bm g_j)\cap\omega\neq\emptyset
    \right\}.
\end{equation}
For each \(\omega_i\), the associated overlap is taken to be the corresponding row closure,\footnote{This closure is closely related to adding the distance-one neighbors of \(\omega_i\) in the graph of \(A\). The distinction between the two is usually minor, but can be significant; see \cref{sec:agg_closure}.}
\begin{equation}
\label{eq:overlap_def}
    \Omega_i
    :=
    \bigcup_{j\in\mathfrak R_{\omega_i}}
    \operatorname{supp}(\bm g_j),
    \qquad
    \Gamma_i:=\Omega_i\setminus\omega_i .
\end{equation}
With DOFs ordered first by aggregate then interface, define
\(
D_{\Omega_i}:=
\begin{bsmallmatrix}
I_{|\omega_i|} & 0\\
0 & 0_{|\Gamma_i|}
\end{bsmallmatrix}.
\)

The factor $G$ provides a natural local SPSD splitting over the overlaps
\(\{\Omega_i\}_{i=1}^{n_{\rm agg}}\). To this end, we define the aggregate-touching multiplicity of the $j$th row of $G$ by
\begin{equation}
\label{eq:row_multiplicity}
    \operatorname{mult}_{\omega}(j)
    :=
    \#\left\{
        i\in\{1,\ldots,n_{\rm agg}\}:
        j\in\mathfrak R_{\omega_i}
    \right\},
\end{equation}
noting that any one row of \(G\) may touch more than one aggregate.
After removing identically zero rows of \(G\), every row touches at least one
aggregate, so \(\operatorname{mult}_{\omega}(j)\ge1\).  For each aggregate,
define
\begin{align} \label{eq:Wmatrix-def}
    W_{\mathfrak R_{\omega_i}}
    :=
    \operatorname{diag}_{j\in\mathfrak R_{\omega_i}}
    \bigl(\operatorname{mult}_{\omega}(j)^{-1}\bigr),
    \qquad
    H_{\Omega_i}
    :=
    W_{\mathfrak R_{\omega_i}}^{1/2}
    G(\mathfrak R_{\omega_i},\Omega_i).
\end{align}
The multiplicity weights distribute each row energy uniformly over the
aggregates touched by that row.  Consequently, we have the following exact SPSD splitting
\begin{equation}
\label{eq:SPSD_splitting}
    A
    =
    \sum_{i=1}^{n_{\rm agg}}
    R_{\Omega_i}^{\top}
    \widetilde A_{\Omega_i}
    R_{\Omega_i},
    \qquad
    \widetilde A_{\Omega_i}
    :=
    H_{\Omega_i}^{\top}H_{\Omega_i}
    \succeq0 .
\end{equation}
The local matrices \(\widetilde A_{\Omega_i}\) are therefore assembled from
weighted rows of \(G\), rather than by simply restricting the global matrix
\(A\) to \(\Omega_i\).

The interpolation \(P\) is block diagonal over the aggregates
\(\{\omega_i\}_{i=1}^{n_{\rm agg}}\), and is constructed as follows.  
For each overlap, let
\(A_{\Omega_i}:=R_{\Omega_i}AR_{\Omega_i}^{\top}\) be the principal submatrix of $A$ on $\Omega_i$.  
Consider the local generalized
eigenvalue problem
\begin{equation}
\label{eq:gep_local}
    D_{\Omega_i}A_{\Omega_i}D_{\Omega_i}\bm u_i
    =
    \lambda\,\widetilde A_{\Omega_i}\bm u_i .
\end{equation}
For finite nonzero eigenvalues, eliminating the interface variables gives the following 
reduced problem on the aggregate DOFs,
\begin{equation}
\label{eq:gep_reduced}
    A_{\omega_i,\omega_i}\bm u_{\omega_i}
    =
    \lambda\,\widetilde S_i\bm u_{\omega_i},
    \qquad
    \widetilde S_i
    :=
    \widetilde A_{\omega_i,\omega_i}
    -
    \widetilde A_{\omega_i,\Gamma_i}
    \bigl(\widetilde A_{\Gamma_i,\Gamma_i}\bigr)^{\dagger}
    \widetilde A_{\Gamma_i,\omega_i}.
\end{equation}
Here \(\widetilde S_i\) is the Schur complement of
\(\widetilde A_{\Omega_i}\) onto \(\omega_i\), and \(\dagger\) denotes the
Moore--Penrose pseudoinverse.  The local coarse space is built from local generalized eigenmodes with eigenvalue
above a prescribed spectral threshold $ \tau_{\rm cut}$, together with the \(+\infty\) modes
corresponding to \(\ker(\widetilde S_i)\).  
In terms of the convergence theory from \cite{krzysik2026theory} discussed above, one has \(\tau_{\max} \leq \tau_{\rm cut}\), where \(\tau_{\max}\) is the realized largest discarded eigenvalue.  
If \(Z_i\) denotes the retained
basis on \(\omega_i\), then
\[
    P
    =
    \begin{bmatrix}
        R_{\omega_1}^{\top}Z_1
        \;\cdots\;
        R_{\omega_{n_{\rm agg}}}^{\top}Z_{n_{\rm agg}}
    \end{bmatrix}.
\]
Thus the columns of \(P\) have nonoverlapping aggregate support, while the
eigenproblems used to define those columns are posed on the overlapping
subdomains \(\Omega_i\). In this paper we propose an adaptive local eigenvalue threshold 
\begin{equation}\label{eq:tau-cut}
    \tau_{\rm cut} \propto \max_j {\rm mult}_{\omega}(j),
\end{equation}
with an order-one proportionality constant.
This choice is motivated by trying to balance convergence rates with operator complexity in view of the solution structure of the local eigenvalue problems \eqref{eq:gep_reduced}, although for brevity we do not discuss in more detail here.

Finally, the multilevel construction follows because Galerkin coarsening
preserves the Gram form:
\(
    A_c
    =
    P^{\top}AP
    =
    P^{\top}G^{\top}GP
    =
    (GP)^{\top}(GP).
\)
Thus one sets \(G_c:=GP\), so that \(A_c=G_c^{\top}G_c\).  The same
aggregation, row-based SPSD splitting, local spectral construction, and block
Schwarz smoothing can then be repeated on the coarse level.  Recursing in
this way gives the multilevel LS-AMG-DD hierarchy, where a direct solve is used on the
coarsest level.

\section{The existence of sparse Gram representations}
\label{sec:local_gram_existence}

In the LS-AMG-DD paper \cite{southworth2026lsamgdd}, the required Gram structure \(A=G^{\top}G\) appears through several ad hoc constructions.
For example, it is noted that scalar elliptic operators can be factored along the lines of \(- \div \grad = ( \grad )^\top ( \grad ) \), or that Schur complements of certain \(2\times 2\) mixed finite-element formulations can be well approximated by stacked Gram representations. 
More generally, given only a sparse SPD matrix \(A\), when should one expect a \emph{local} Gram representation to exist in the first place?
In this section we address this question and show that the above examples are instances of a general equivalence between local Gram structure and local SPSD energy splittings.

Note that every SPD matrix admits a global Gram representation, via e.g., the square root \(A=A^{1/2}A^{1/2} = G^\top G\), or Cholesky \(A=LL^{\top}=G^{\top}G\). 
What LS-AMG-DD requires, at least to be practical, is \emph{locality}, i.e. a representation whose \emph{rows} are supported on prescribed small subsets of DOFs. This motivates the following definition of a DOF cover.
\begin{definition}[DOF cover] \label{def:cover}
Given $A \in \mathbb{R}^{n \times n}$, we define a \emph{cover} ${\cal C}$ of its DOF set $\{1, \ldots, n\}$ such that
\[
{\cal C}=\{U_\alpha\}_{\alpha\in I},
\qquad
\bigcup_{\alpha\in I}U_\alpha=\{1,\dots,n\},
\]
with each cover member \(U_\alpha\subseteq\{1,\dots,n\}\) a DOF subset. 
For each \(U_\alpha\in{\cal C}\), let \({R_\alpha : \mathbb{R}^n \to \mathbb{R}^{|U_{\alpha}|}}\) denote Boolean restriction from $\{1, \ldots, n\}$ to \(U_\alpha\).
If ${G\in\mathbb{R}^{m\times n}}$ is as in \eqref{eq:G_row_convention}, then we say \(G\) is \emph{\({\cal C}\)-local} if
\[
\forall j\in\{1,\dots,m\}\ \ \exists\,\alpha(j)\in I \ \ \text{such that}\ \ \operatorname{supp}(\bm{g}_j)\subseteq U_{\alpha(j)}.
\]
That is, \(G\) is \emph{\({\cal C}\)-local} if each row \(\bm{g}_j^\top\) is supported entirely inside \emph{at least one} member of the cover.
\end{definition}

Note that at this stage the DOF sets \(U_\alpha\), $\alpha \in I$, are purely algebraic; for example, they could be induced by a discretization (e.g.\ the DOFs supported on a mesh element) or by a solver construction (e.g.\ overlapping Schwarz patches).
Note also that because cover elements can overlap, a row of $G$ may intersect multiple cover members, but the key point is that it is fully supported in at least one member.  
\subsection{Equivalence between local Gram representations and local SPSD splittings}

The key takeaway of this section is that local Gram structure is equivalent to local SPSD quadratic-energy structure, as in the following theorem.

\begin{theorem}[Local SPSD splitting \(\Longleftrightarrow\) local Gram]
\label{thm:local_split_iff_local_gram}
Let \(A\in\mathbb R^{n\times n}\) be SPD, and let \({\cal C}=\{U_\alpha\}_{\alpha\in I}\) be a cover of its DOF set as in \cref{def:cover}. 
Then the following are equivalent:
\begin{enumerate}
\item there exist matrices \(\widetilde A_\alpha\in\mathbb R^{|U_\alpha|\times |U_\alpha|}\) with \(\widetilde A_\alpha\succeq0\) such that
\begin{equation}
\label{eq:cover_local_spsd_split}
A=\sum_{\alpha\in I} R_\alpha^{\top}\widetilde A_\alpha R_\alpha;
\end{equation}
\item there exists a \({\cal C}\)-local Gram representation \(A=G^{\top}G\).
\end{enumerate}
\end{theorem}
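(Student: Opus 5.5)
Both directions are constructive, and I would prove them in turn by factoring or regrouping the local pieces.

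For (1) \(\Rightarrow\) (2), I factor each local SPSD block. Since \(\widetilde A_\alpha\succeq 0\), it admits a factorization \(\widetilde A_\alpha = H_\alpha^\top H_\alpha\) with \(H_\alpha\in\mathbb R^{r_\alpha\times|U_\alpha|}\) and \(r_\alpha=\rank\widetilde A_\alpha\). One can take the symmetric square root with zero rows removed, or a pivoted Cholesky or eigendecomposition factor. I then set
\[
G := \begin{bmatrix} \vdots \\ H_\alpha R_\alpha \\ \vdots \end{bmatrix}_{\alpha\in I},
\]
stacking the blocks vertically. The identity \(G^\top G=\sum_\alpha R_\alpha^\top H_\alpha^\top H_\alpha R_\alpha = A\) is then immediate. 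Each row of the block \(H_\alpha R_\alpha\) has support in \(U_\alpha\), because \(R_\alpha^\top\) embeds \(\mathbb R^{|U_\alpha|}\) into the coordinates indexed by \(U_\alpha\). Hence \(G\) is \({\cal C}\)-local with \(\alpha(j)=\alpha\) for those rows.

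For (2) \(\Rightarrow\) (1), I group the rows of \(G\) by cover member. For each row \(j\) I choose a single index \(\alpha(j)\) with \(\operatorname{supp}(\bm g_j)\subseteq U_{\alpha(j)}\), which exists by \({\cal C}\)-locality. This partitions the rows into disjoint classes \(J_\alpha=\{j:\alpha(j)=\alpha\}\). For \(j\in J_\alpha\) the support condition gives \(\bm g_j = R_\alpha^\top R_\alpha \bm g_j\), so
\[
\bm g_j\bm g_j^\top = R_\alpha^\top (R_\alpha\bm g_j)(R_\alpha\bm g_j)^\top R_\alpha .
\]
I then define \(\widetilde A_\alpha := \sum_{j\in J_\alpha}(R_\alpha\bm g_j)(R_\alpha\bm g_j)^\top\), which is SPSD as a sum of rank-one SPSD terms, and is the zero matrix when \(J_\alpha=\emptyset\). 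Summing over \(\alpha\) gives \(A=\sum_j \bm g_j\bm g_j^\top=\sum_\alpha R_\alpha^\top\widetilde A_\alpha R_\alpha\).

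As an alternative to the single assignment \(\alpha(j)\), each row energy could be split with weights over all members containing its support, mirroring the multiplicity weights in \eqref{eq:Wmatrix-def}; either choice works.

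The only point needing care is bookkeeping rather than substance. I need to check that choosing one representative \(\alpha(j)\) per row avoids double counting. When \(I\) is infinite, the sum in \eqref{eq:cover_local_spsd_split} should be read as finite: only finitely many \(\widetilde A_\alpha\) are nonzero in direction (2) \(\Rightarrow\) (1), and in direction (1) \(\Rightarrow\) (2) only the members with nonzero \(\widetilde A_\alpha\) contribute rows. In practice \(I\) is finite anyway. The SPD hypothesis on \(A\) is used only to ensure that \(G\) has full column rank; it plays no other role in the equivalence.
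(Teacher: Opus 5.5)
Your proposal is correct and follows the paper's proof essentially step for step. You factor each $\widetilde A_\alpha$ and stack the blocks $H_\alpha R_\alpha$, and you assign each row to a single cover member before regrouping the outer products. Your weighted-splitting remark matches the paper's follow-up lemma on row outer-product allocation.
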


\begin{proof}
\textbf{Case 1:} ${\cal C}$-local SPSD splitting $\Longrightarrow {\cal C}$-local Gram.
Assume \eqref{eq:cover_local_spsd_split}. For each \(\alpha\), factor \(\widetilde A_\alpha=B_\alpha^{\top}B_\alpha\), and define \(G\) by stacking the matrices
\(B_\alpha R_\alpha\) vertically. Then
\[
G^{\top}G
=
\sum_{\alpha\in I} R_\alpha^{\top}B_\alpha^{\top}B_\alpha R_\alpha
=
\sum_{\alpha\in I} R_\alpha^{\top}\widetilde A_\alpha R_\alpha
=
A,
\]
and every row of \(B_\alpha R_\alpha\) is supported in \(U_\alpha\), so \(G\) is \({\cal C}\)-local.

\textbf{Case 2:} ${\cal C}$-local Gram $\Longrightarrow {\cal C}$-local SPSD splitting.
Assume \(A=G^{\top}G\) as in \eqref{eq:G_row_convention}, with \(G\in\mathbb R^{m\times n}\) \({\cal C}\)-local. 
Our proof proceeds by regrouping the row outer products \( \bm{g}_j \bm{g}_j^{\top}\) into patch-local contributions.

\emph{Step 1 (assign each row outer product to a patch that contains it).}
Recall from \cref{def:cover} that \({\cal C}\)-locality of $G \in \mathbb{R}^{m \times n}$ means that for each row index \(j \in \{1, \ldots, m\}\) there exists at least one \(\alpha\in I\) with \(\operatorname{supp}(\bm{g}_j)\subseteq U_\alpha\).
Based on this, we now assign each row to one of the cover members that supports it.
Specifically, let $J_\alpha \subseteq \{ 1, \ldots, m\}$, $\alpha \in I$, be a subset of rows such that \(\operatorname{supp}(\bm{g}_j)\subseteq U_\alpha\) for every \(j\in J_\alpha\), and, suppose, moreover that every row is assigned to exactly one subset (i.e., the sets \(\{ J_{\alpha} \}_{\alpha \in I}\) cover $\{1, \ldots, m\}$ and are pairwise disjoint).

\emph{Step 2 (restrict to local coordinates and reassemble).}
Fix \(\alpha\), and suppose that $J_{\alpha} = \{j_1, \ldots, j_{|J_{\alpha}|}\}$.
Then let
\(G_\alpha
:=
G(J_\alpha,:)
\)
be the submatrix of $G$ consisting of rows in \(J_\alpha\), 
and define
\(
\wh{G}_\alpha
:=
G_\alpha R_\alpha^\top
\).
Now, since every row in \(J_\alpha\) is fully supported in \(U_\alpha\), we lose no information from Boolean restriction of each row from $\{1, \ldots, n \}$ onto \(U_\alpha\) and then embedding it back again.
That is, restricting \(\bm g_j\) to \(U_\alpha\)
and embedding back recovers \(\bm g_j\), i.e.\ \(\bm g_j=R_\alpha^\top\wh{\bm g}_{j\alpha}\) with \(\wh{\bm g}_{j\alpha}=R_\alpha\bm g_j\).
Hence
\(
G_\alpha^{\top} G_\alpha 
=
R_\alpha^{\top}\wh{G}_\alpha^{\top}\wh{G}_\alpha R_\alpha
\)

Summing over \(\alpha\), using that every row index belongs to a single index subset \(J_{\alpha}\), gives
\[
A
=G^{\top}G
=\sum_{\alpha\in I} G_\alpha^{\top}G_\alpha
=\sum_{\alpha\in I} 
R_\alpha^{\top}\wh{G}_\alpha^{\top}
\wh{G}_\alpha R_\alpha
\]
which is \eqref{eq:cover_local_spsd_split} with \(\widetilde A_\alpha:= \wh{G}_\alpha^{\top}
\wh{G}_\alpha \succeq 0\).
\end{proof}

Thus \cref{thm:local_split_iff_local_gram} gives a complete characterization of the problem on a \emph{fixed prescribed cover} \({\cal C}\): an SPD matrix admits a \({\cal C}\)-local Gram representation if and only if it admits an exact \({\cal C}\)-local SPSD splitting. In other words, once the cover \({\cal C}\) has been chosen, the existence of a local Gram is equivalent to the existence of local SPSD quadratic energies supported on that same cover.

The second step in the above proof shows how to construct \emph{one} particular SPSD splitting given a Gram.  
This splitting is not unique, however, with the following generalizing this construction to a family of local SPSD splittings given a local Gram. 

\begin{lemma}
\label{lem:row_atom_allocation}
Let \(A=G^{\top}G\) with \(G\in\mathbb R^{m\times n}\) ${\cal C}$-local.
Given weights \(w_{j\alpha}\ge 0\) such that, for each \(j\),
\[
\sum_{\alpha\in I} w_{j\alpha}=1,
\qquad\text{and}\qquad
w_{j\alpha}>0 \ \Longrightarrow\ \operatorname{supp}(\bm g_j)\subseteq U_\alpha .
\]
Define for each \(\alpha\in I\)
\[
\wh{\bm g}_{j\alpha}:=R_\alpha \bm g_j\in \mathbb R^{|U_\alpha|},
\qquad
\widetilde A_\alpha
:=
\sum_{j=1}^{m} w_{j\alpha}\,\wh{\bm g}_{j\alpha}\wh{\bm g}_{j\alpha}^{\top}\ \in\ \mathbb R^{|U_\alpha|\times |U_\alpha|}.
\]
Then each \(\widetilde A_\alpha\succeq 0\) and
\[
A=\sum_{\alpha\in I} R_\alpha^{\top}\widetilde A_\alpha R_\alpha,
\]
i.e.\ the weights \(\{w_{j\alpha}\}\) induce a \({\cal C}\)-local SPSD splitting of \(A\).
\end{lemma}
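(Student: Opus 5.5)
The plan is to mimic Step 2 in the proof of \cref{thm:local_split_iff_local_gram}, replacing the hard assignment of rows to patches by a convex partition of unity over patches. I would establish two facts in turn: positive semidefiniteness of each \(\widetilde A_\alpha\), and then the exact reassembly of \(A\). The only structural input is the compatibility condition \(w_{j\alpha}>0\Rightarrow\operatorname{supp}(\bm g_j)\subseteq U_\alpha\).

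\textbf{Step 1 (SPSD).} Each \(\widetilde A_\alpha\) is a nonnegative combination of rank-one outer products \(\wh{\bm g}_{j\alpha}\wh{\bm g}_{j\alpha}^\top\). For any \(\bm v\in\mathbb R^{|U_\alpha|}\) we have
\[
\bm v^\top\widetilde A_\alpha\bm v=\sum_j w_{j\alpha}(\wh{\bm g}_{j\alpha}^\top\bm v)^2\ge0 .
\]
Symmetry is immediate from the construction.

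\textbf{Step 2 (lifting each term).} Fix a pair \((j,\alpha)\) and compute
\[
R_\alpha^\top\wh{\bm g}_{j\alpha}\wh{\bm g}_{j\alpha}^\top R_\alpha=(R_\alpha^\top R_\alpha\bm g_j)(R_\alpha^\top R_\alpha\bm g_j)^\top .
\]
Here \(R_\alpha^\top R_\alpha\) is the diagonal Boolean projector onto the coordinates in \(U_\alpha\). Whenever \(w_{j\alpha}>0\), the hypothesis gives \(\operatorname{supp}(\bm g_j)\subseteq U_\alpha\), so \(R_\alpha^\top R_\alpha\bm g_j=\bm g_j\), exactly as in the proof of \cref{thm:local_split_iff_local_gram}. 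When \(w_{j\alpha}=0\), the term vanishes regardless of the support of \(\bm g_j\), so no condition is needed there. In both cases,
\[
w_{j\alpha}R_\alpha^\top\wh{\bm g}_{j\alpha}\wh{\bm g}_{j\alpha}^\top R_\alpha=w_{j\alpha}\bm g_j\bm g_j^\top .
\]

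\textbf{Step 3 (resumming).} Sum over \(\alpha\) and \(j\), and exchange the two finite sums. The exchange is legitimate provided only finitely many \(w_{j\alpha}\) are nonzero for each \(j\), which is automatic if \(I\) is finite; I would assume this, or note it. Then
\[
\sum_\alpha R_\alpha^\top\widetilde A_\alpha R_\alpha=\sum_j\Bigl(\sum_\alpha w_{j\alpha}\Bigr)\bm g_j\bm g_j^\top=\sum_j\bm g_j\bm g_j^\top=G^\top G=A .
\]

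The only delicate point is the case split in Step 2, namely that zero weights remove terms whose row is not supported in \(U_\alpha\). There is no substantive obstacle beyond this. Finally, the hard assignment \(w_{j\alpha}=\mathbf 1[j\in J_\alpha]\) recovers the splitting of \cref{thm:local_split_iff_local_gram}. Likewise, on the aggregate overlaps, the weights \(w_{j,i}=\operatorname{mult}_\omega(j)^{-1}\mathbf 1[j\in\mathfrak R_{\omega_i}]\) recover \eqref{eq:SPSD_splitting}.
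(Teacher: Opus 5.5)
Your proposal is correct and follows the paper's proof. Both insert the partition of unity $\sum_\alpha w_{j\alpha}=1$, use the support condition to write $w_{j\alpha}\bm g_j\bm g_j^\top = R_\alpha^\top\bigl(w_{j\alpha}\wh{\bm g}_{j\alpha}\wh{\bm g}_{j\alpha}^\top\bigr)R_\alpha$, regroup by $\alpha$, and obtain semidefiniteness from a nonnegative combination of rank-one terms; the differences are cosmetic, since you run the computation from the splitting back to $A$ and make the zero-weight case explicit, whereas the paper handles it implicitly by fixing only pairs with $w_{j\alpha}>0$.
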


\begin{proof}
Since \(A=G^{\top}G=\sum_{j=1}^{m}\bm g_j\bm g_j^{\top}\) and \(\sum_{\alpha\in I} w_{j\alpha}=1\), we may write
\[
A=\sum_{j=1}^{m}\sum_{\alpha\in I} w_{j\alpha}\,\bm g_j\bm g_j^{\top}.
\]
Fix \(j\) and \(\alpha\) with \(w_{j\alpha}>0\). By assumption \(\operatorname{supp}(\bm g_j)\subseteq U_\alpha\), hence restricting \(\bm g_j\) to \(U_\alpha\)
and embedding back recovers \(\bm g_j\), i.e.\ \(\bm g_j=R_\alpha^\top\wh{\bm g}_{j\alpha}\) with \(\wh{\bm g}_{j\alpha}=R_\alpha\bm g_j\).
Therefore
\[
w_{j\alpha}\,\bm g_j\bm g_j^{\top}
=
w_{j\alpha}\,R_\alpha^{\top}\wh{\bm g}_{j\alpha}\wh{\bm g}_{j\alpha}^{\top}R_\alpha
=
R_\alpha^{\top}\bigl((\sqrt{w_{j\alpha}}\wh{\bm g}_{j\alpha})(\sqrt{w_{j\alpha}}\wh{\bm g}_{j\alpha})^{\top}\bigr)R_\alpha.
\]
Summing over \(j\) and regrouping by \(\alpha\) yields
\(A=\sum_{\alpha\in I} R_\alpha^{\top}\widetilde A_\alpha R_\alpha\).
Each \(\widetilde A_\alpha\) is a sum of rank-at-most-one SPSD matrices and is therefore SPSD.
\end{proof}

\begin{remark}
If \(w_{j\alpha}\in\{0,1\}\) and \(\sum_{\alpha} w_{j\alpha}=1\), then each row outer product \(\bm{g}_j \bm{g}_j^{\top}\) is assigned to exactly one patch \(U_\alpha\).
This is the ``hard assignment'' used in the proof of \cref{thm:local_split_iff_local_gram}.
If \(w_{j\alpha}=1/|I_j|\) for \(\alpha\in I_j\), and \(w_{j\alpha}=0\) otherwise, then each row outer product is split uniformly among all cover members that can accommodate it
(i.e.\ among all \(\alpha\) with \(\operatorname{supp}(\bm{g}_j)\subseteq U_\alpha\)).
In LS-AMG-DD \cite{southworth2026lsamgdd}, one instantiates the cover \(U_i:=\Omega_i\), and constructs an exact overlapping SPSD splitting directly from a global Gram \(A=G^{\top}G\) by distributing row outer products across patches with the diagonal multiplicity matrix \(W_{\mathfrak R_{\omega_i}}\) from \eqref{eq:Wmatrix-def}. 
Since there is one patch \(\Omega_i\) per aggregate \(\omega_i\), and row \(j\) is assigned uniformly over precisely those indices \(i\) for which it touches \(\omega_i\), this is a uniform row outer product allocation over the admissible patches in that construction.
The resulting patch matrices are SPSD and satisfy \(A=\sum_i R_{\Omega_i}^{\top}\widetilde A_i R_{\Omega_i}\), i.e.\ they fit exactly into the
row outer product allocation framework above.
\end{remark}

\section{Conforming finite-element discretizations} \label{sec:conforming_discr}

For SPD PDE discretizations, the most straightforward source of \cref{thm:local_split_iff_local_gram} is a method that is assembled from loops of local SPD or SPSD quadratic contributions. This is readily the case for conforming finite-element methods, in which the finite-element space is set to be a proper subspace of the Sobolev space occurring in the problem's weak formulation. In this case, the global matrix is assembled from cell-wise contributions
\begin{align} \label{eq:A_elem_SPDS_split}
A=\sum_{T \in {\cal T}_h} R_T^{\top}A_T R_T,
\end{align}
with local element matrices \(A_T\succeq0\). Thus the local quadratic energies and the associated cover are built into the discretization from the outset. 

In the numerical results section below, we consider SPD problems given weakly by: find $u \in \mathbb{V}({\cal D})$ such that
\[
 a(u,v) = \ell(v) \qquad
    \forall v\in \mathbb{V}({\cal D}),
\]
for Sobolev space $\mathbb{V}({\cal D})$ over a domain $\mathcal{D}$. Here $a$ is a bilinear form and $\ell$ a linear form. 
For a suitable conforming finite-element space \(V_h \subset \mathbb{V}({\cal D})\), the global bilinear form has the elementwise decomposition
\[
a(u,v)
=
\sum_{T\in{\cal T}_h} a_T(u,v),
\]
where \(a_T(\cdot,\cdot)\) denotes the contribution associated with the element \(T\), including any boundary-facet terms assigned to \(T\) (see also \cref{sec:elemt_gram_stacking}).
Equivalently, each element contributes the local quadratic energy
\(
a_T(u,u). 
\)
This local energy is the object that gives the elementwise SPSD contribution to the assembled operator.
If \(\{\phi_{T,i}\}_i\) denotes the local basis on \(T\), where the basis functions may be scalar- or vector-valued depending on the space, the element matrix is
\(
(A_T)_{ij}
=
a_T(\phi_{T,j},\phi_{T,i}).
\)

In \Cref{sec:numerics} we consider three scenarios, based on the Sobolev spaces \(H(\div;\mathcal D)\), \(H^2(\mathcal D)\), and $[H^1(\mathcal D)]^2$, together with $a$ corresponding to a grad-div plus mass, an anisotropic hyperviscosity, and a linear elasticity-type operator. Whether other discretizations admit the required local quadratic energy structure is generally less transparent. It does not necessarily hold for non-conforming discretizations such as the discontinuous Galerkin (DG) space-based interior penalty method for elliptic problems. In particular, their element assembly includes a loop over interior facet integrals, which by themselves will generally not be coercive. It is unclear if suitably reformulated versions of DG schemes may circumvent this problem, and in this work, we restrict ourselves to conforming discretizations.
In terms of finite-difference discretizations, structure-preserving finite-difference schemes designed from discrete energy or flux principles likely admit the required structure \cite{morel1998local,BeiraoDaVeiga2014MFD}. In some preliminary work not presented in this paper, we have also been able to manufacture the required structure for certain stencil-based finite-difference discretizations, essentially by introducing specific cell-local functionals and reorganizing the stencil into a cell-based SPSD form so that it more closely resembles a conforming finite-element discretization. However, it is not clear if general stencil-based finite-difference schemes admit the required structure, since from the stencil form alone, there is no canonical cover, no explicit family of local SPSD pieces, and hence no immediately visible route to \cref{thm:local_split_iff_local_gram}.

\section{Elementwise Gram formulation for conforming discretizations}
\label{sec:elementwise_gram_formulation}

In this section we construct a global Gram factor $G$ in the conforming finite-element context using element information.
We then discuss the numerical computation of elementwise Gram factors in \cref{sec:gram-local-compute}, the subtlety of potential versus assembled distance-one neighbors in $A$ in \cref{sec:agg_closure}, and then mesh-entity structure that arises in the algebraic overlaps $\{ \Omega_i \}$ when $G$ is built from elementwise information.

\subsection{Elementwise SPSD assembly and Gram stacking}
\label{sec:elemt_gram_stacking}

Let \(V_h\subset V\) be any conforming finite-element space, and let \(A\in\mathbb R^{n\times n}\) be the assembled SPD matrix.
Accordingly, we assume that \(A\) is assembled from a family of element-local SPSD contributions over the mesh as in \eqref{eq:A_elem_SPDS_split}; that is,
\begin{align} \label{eq:conforming_local_spsd_assembly}
A=\sum_{T\in\mathcal T_h} R_T^{\top}A_T R_T,
\qquad
A_T\succeq 0.
\end{align}
Recall that \(R_T:\mathbb R^n\to\mathbb R^{n_T}\) is the Boolean restriction from the global DOFs to the DOF set of element $T$, and \(A_T\in\mathbb R^{n_T\times n_T}\) is the block on element $T$.

\begin{proposition}[Elementwise Gram formulation for conforming assemblies]
\label{prop:conforming_elementwise_gram}
Assume \eqref{eq:conforming_local_spsd_assembly}. For each element \(T\in{\cal T}_h\), let \(G_T\in\mathbb R^{m_T\times n_T}\) be any local Gram factor such that
\(
\label{eq:conforming_local_gram}
A_T = G_T^{\top}G_T.
\)
If $G$ is defined by stacking the blocks \(G_T R_T\in\mathbb R^{m_T\times n}\) as
\[
G :=
\begin{bmatrix}
G_{T_1} R_{T_1}\\
G_{T_2} R_{T_2}\\
\vdots
\end{bmatrix}
\in \mathbb{R}^{m \times n},
\]
with $m = \sum_{T \in {\cal T}_h} m_T$.
Then
\begin{equation}
\label{eq:conforming_global_gram}
A = G^{\top}G,
\qquad
\mathrm{and}  
\qquad
\bm u^{\top}A\bm u
=
\sum_{T\in{\cal T}_h} \|G_T (R_{T}\bm u)\|_2^2
\quad
\forall \bm u\in\mathbb R^n.
\end{equation}
\end{proposition}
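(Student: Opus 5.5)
The argument is a direct block computation, essentially Case~1 of the proof of \cref{thm:local_split_iff_local_gram} specialized to the element cover $\mathcal C=\{U_T\}_{T\in\mathcal T_h}$, where $U_T$ is the DOF set of $T$ and $R_T$ is its Boolean restriction.

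First, I will write $G^{\top}G$ blockwise. Since $G$ is a vertical stack of the blocks $G_TR_T$, the product of the stacked matrix with its transpose is the sum of the per-block products:
\[
G^{\top}G=\sum_{T\in\mathcal T_h}(G_TR_T)^{\top}(G_TR_T)=\sum_{T\in\mathcal T_h}R_T^{\top}G_T^{\top}G_TR_T .
\]
Substituting $G_T^{\top}G_T=A_T$ gives $\sum_T R_T^{\top}A_TR_T$, which equals $A$ by \eqref{eq:conforming_local_spsd_assembly}.

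Second, for the energy identity I fix $\bm u\in\mathbb R^n$. Then
\[
\bm u^{\top}A\bm u=\bm u^{\top}G^{\top}G\bm u=\|G\bm u\|_2^2 .
\]
The vector $G\bm u$ is the stack of the subvectors $G_T(R_T\bm u)$, so its squared Euclidean norm is the sum of their squared norms. This is the second claim in \eqref{eq:conforming_global_gram}.

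There is no real obstacle here. The only points needing a remark are bookkeeping ones:
\begin{itemize}
\item Existence of the local factors $G_T$: each $A_T\succeq0$ admits such a factor, for example $A_T^{1/2}$ or a pivoted Cholesky/LDL$^{\top}$ factor. The proposition only assumes one is given.
\item The stacking order of the blocks does not affect $G^{\top}G$.
\item $m=\sum_T m_T$ simply counts the rows.
\item $G$ is automatically $\mathcal C$-local, since every row of $G_TR_T$ is supported in $U_T$. This is consistent with \cref{thm:local_split_iff_local_gram}.
\item Because $A$ is SPD, the resulting $G$ has full column rank, as required in \cref{sec:background}.
\end{itemize}
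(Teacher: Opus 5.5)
Your proposal is correct and follows the paper's own proof, which simply cites Case~1 of the proof of \cref{thm:local_split_iff_local_gram} specialized to the element cover. Your explicit derivation of the energy identity from $\|G\bm u\|_2^2=\sum_{T}\|G_T(R_T\bm u)\|_2^2$ fills in a step the paper leaves implicit.
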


\begin{proof}
    This is simply the specialization of the construction used in the proof of \cref{thm:local_split_iff_local_gram} to cover members given by mesh elements.
\end{proof}

\cref{fig:Q1_G_split} gives a schematic interpretation of \cref{prop:conforming_elementwise_gram}. 
Each element \(T\) contributes a local Gram factor \(G_T\), and the global factor \(G\) is obtained by stacking the embedded element blocks \(G_T R_T\). 
Thus each row of \(G\) is associated with a mesh element, and the rows altogether represent an unassembled form of the conforming discretization. 
By contrast, the assembled operator \(A=\sum_T R_T^\top A_T R_T = G^\top G\) acts on the conforming global DOFs after local element copies across shared interfaces have been reduced, so individual elements are no longer separated in \(A\). 

\begin{figure}[t!]
\centering
\begin{tikzpicture}[
  >={Latex[length=2mm]},
  mesh/.style={line width=0.5pt, black!60},
  faintvtx/.style={circle, draw=black!35, fill=black!15, inner sep=1.2pt, line width=0.2pt},
  asmvtx/.style={circle, draw=black, fill=black!70, inner sep=1.6pt, line width=0.25pt},
  locA/.style={circle, draw=black, fill=blue!65, inner sep=1.6pt, line width=0.25pt},
  locB/.style={circle, draw=black, fill=orange!85!black, inner sep=1.6pt, line width=0.25pt},
  locC/.style={circle, draw=black, fill=green!55!black, inner sep=1.6pt, line width=0.25pt},
  locD/.style={circle, draw=black, fill=purple!65, inner sep=1.6pt, line width=0.25pt},
  elA/.style={fill=blue!8, draw=none},
  elB/.style={fill=orange!10, draw=none},
  elC/.style={fill=green!8, draw=none},
  elD/.style={fill=purple!8, draw=none},
  stackA/.style={fill=blue!10, draw=black, line width=0.55pt, rounded corners=1pt},
  stackB/.style={fill=orange!10, draw=black, line width=0.55pt, rounded corners=1pt},
  stackC/.style={fill=green!10, draw=black, line width=0.55pt, rounded corners=1pt},
  stackD/.style={fill=purple!10, draw=black, line width=0.55pt, rounded corners=1pt},
  panelcap/.style={font=\small, align=center},
  callout/.style={draw=black!75, dashed, line width=0.7pt}
]

\def\s{1.4}
\def\e{0.175}
\def\gapAB{1.65}
\def\gapBC{1.65}
\def\stackW{1.95}
\def\blockH{0.68}
\def\ycap{-0.48}

\pgfmathsetmacro{\twos}{2*\s}
\pgfmathsetmacro{\stackH}{4*\blockH}
\pgfmathsetmacro{\xA}{0}
\pgfmathsetmacro{\xB}{\twos+\gapAB}
\pgfmathsetmacro{\xC}{\xB+\stackW+\gapBC}
\pgfmathsetmacro{\yone}{\blockH}
\pgfmathsetmacro{\ytwo}{2*\blockH}
\pgfmathsetmacro{\ythree}{3*\blockH}
\pgfmathsetmacro{\yfour}{4*\blockH}

\begin{scope}[shift={(\xA,0)}]
  \path[elA] (0,0) rectangle (\s,\s);
  \path[elB] (\s,0) rectangle (\twos,\s);
  \path[elC] (0,\s) rectangle (\s,\twos);
  \path[elD] (\s,\s) rectangle (\twos,\twos);

  \foreach \k in {0,\s,\twos} {
    \draw[mesh] (\k,0) -- (\k,\twos);
    \draw[mesh] (0,\k) -- (\twos,\k);
  }
  \draw[mesh] (0,0) rectangle (\twos,\twos);

  \foreach \x in {0,\s,\twos} {
    \foreach \y in {0,\s,\twos} {
      \node[faintvtx] at (\x,\y) {};
    }
  }

  \node[locA] at (\e,\e) {};
  \node[locA] at ({\s-\e},\e) {};
  \node[locA] at (\e,{\s-\e}) {};
  \node[locA] at ({\s-\e},{\s-\e}) {};

  \node[locB] at ({\s+\e},\e) {};
  \node[locB] at ({\twos-\e},\e) {};
  \node[locB] at ({\s+\e},{\s-\e}) {};
  \node[locB] at ({\twos-\e},{\s-\e}) {};

  \node[locC] at (\e,{\s+\e}) {};
  \node[locC] at ({\s-\e},{\s+\e}) {};
  \node[locC] at (\e,{\twos-\e}) {};
  \node[locC] at ({\s-\e},{\twos-\e}) {};

  \node[locD] at ({\s+\e},{\s+\e}) {};
  \node[locD] at ({\twos-\e},{\s+\e}) {};
  \node[locD] at ({\s+\e},{\twos-\e}) {};
  \node[locD] at ({\twos-\e},{\twos-\e}) {};

  \node[font=\scriptsize] at ({0.5*\s},{0.5*\s}) {\(T_1\)};
  \node[font=\scriptsize] at ({1.5*\s},{0.5*\s}) {\(T_2\)};
  \node[font=\scriptsize] at ({0.5*\s},{1.5*\s}) {\(T_3\)};
  \node[font=\scriptsize] at ({1.5*\s},{1.5*\s}) {\(T_4\)};

  \draw[callout] (\s,\s) circle[radius=0.34];

  \node[panelcap] at ({\s},{\ycap}) {(a) Unassembled operator};
\end{scope}

\draw[->, line width=0.8pt] ({\twos+0.18},{\s}) -- ({\xB-0.15},{\s});

\begin{scope}[shift={(\xB,0)}]
  \draw[line width=0.7pt] (0,0) rectangle (\stackW,\stackH);

  \path[stackD] (0,\ythree) rectangle (\stackW,\yfour);
  \path[stackC] (0,\ytwo) rectangle (\stackW,\ythree);
  \path[stackB] (0,\yone) rectangle (\stackW,\ytwo);
  \path[stackA] (0,0) rectangle (\stackW,\yone);

  \foreach \yb in {0,\yone,\ytwo,\ythree} {
    \foreach \t in {0.18,0.38,0.58,0.78} {
      \draw[black!18, line width=0.25pt]
        ({0.10},{\yb+\t*\blockH}) -- ({\stackW-0.10},{\yb+\t*\blockH});
    }
  }

  \node[font=\tiny] at ({0.5*\stackW},{0.5*\yone}) {\(G_{T_1}R_{T_1}\)};
  \node[font=\tiny] at ({0.5*\stackW},{0.5*(\yone+\ytwo)}) {\(G_{T_2}R_{T_2}\)};
  \node[font=\tiny] at ({0.5*\stackW},{0.5*(\ytwo+\ythree)}) {\(G_{T_3}R_{T_3}\)};
  \node[font=\tiny] at ({0.5*\stackW},{0.5*(\ythree+\yfour)}) {\(G_{T_4}R_{T_4}\)};

  \node[panelcap] at ({0.5*\stackW},{\ycap}) {(b) Gram factor \(G\)};
\end{scope}

\draw[->, line width=0.8pt] ({\xB+\stackW+0.18},{\s}) -- ({\xC-0.15},{\s});

\begin{scope}[shift={(\xC,0)}]
  \foreach \k in {0,\s,\twos} {
    \draw[mesh] (\k,0) -- (\k,\twos);
    \draw[mesh] (0,\k) -- (\twos,\k);
  }
  \draw[mesh] (0,0) rectangle (\twos,\twos);

  \foreach \x in {0,\s,\twos} {
    \foreach \y in {0,\s,\twos} {
      \node[faintvtx] at (\x,\y) {};
    }
  }

  \draw[callout] (\s,\s) circle[radius=0.18];

  \node[panelcap] at ({\s},{\ycap}) {(c) Assembled operator \(A=G^\top G\)};
\end{scope}

\end{tikzpicture}
\caption{Element-local unassembled versus assembled perspectives in conforming \(Q_1\) finite-element discretization on a \(2\times 2\) element patch. 
Left: each element \(T\) carries its own local copies of the vertex DOFs; these are the local coordinates on which the element block \(A_T\) acts, with \(A_T=G_T^\top G_T\).
Middle: stacking the embedded element factors \(G_TR_T\) yields the global Gram factor \(G\), where each block row is associated with one element. 
Right: after assembly, the duplicated local copies are reduced to a single conforming global DOF.}
\label{fig:Q1_G_split}
\end{figure}

The formulation described above requires some nuance in the presence of essential boundary conditions.
If essential boundary conditions are imposed weakly by a symmetric Nitsche method, then the boundary-intersecting element matrices $\{ A_T : T \cap \partial {\cal D} \neq \emptyset \}$ acquire additional boundary-facet contributions. Provided the penalty is chosen large enough that these modified local matrices remain SPSD, \cref{prop:conforming_elementwise_gram} applies verbatim.
By contrast, strong imposition of essential conditions typically modifies the assembled global system, e.g., via eliminating boundary DOFs, which is compatible with the formulation above, but requires extra bookkeeping to express the post-elimination operator as a sum of local SPSD pieces.
To avoid such bookkeeping in our code implementation, the numerical results of \cref{sec:numerics} use discretizations with weakly enforced boundary conditions via Nitsche's method. Such discretizations can still be seen as conforming with respect to continuous formulations in which the boundary conditions are enforced weakly via Lagrange multipliers \cite{stenberg1995some}, which in turn are equivalent to continuous weak formulations with strongly enforced boundary conditions.

\begin{remark}[Finer-scale Grams induce coarser-scale Grams]
\label{rem:coarse_gram}
The local Gram construction in \cref{prop:conforming_elementwise_gram} can be generalized from cover members of individual mesh elements arbitrary collections of elements, since by \eqref{eq:conforming_local_spsd_assembly} the sum over any collection of elements is also SPSD, and hence admits a Gram.
The matrix $G \in \mathbb{R}^{m \times n}$ is generally tall-and-skinny with $m > n$, because DOFs shared across interfaces of cover members contribute one row for every member they appear in; for covers over increasingly larger collections of elements $m \to n$. 
\end{remark}

\subsection{Computing local Gram factors}
\label{sec:gram-local-compute}

\Cref{prop:conforming_elementwise_gram} reduces the global construction \(A=G^{\top}G\) to local factorizations
\(A_T=G_T^{\top}G_T\) of the SPSD blocks \(A_T\succeq 0\). 
We now briefly discuss standard \emph{algebraic} choices for \(G_T\).
In general, a direct method would be preferable over an iterative one for stability reasons.
If \(A_T\succ 0\), take a Cholesky factorization \(A_T=L_TL_T^{\top}\)
and set \(G_T:=L_T^{\top}\). This gives a square factor \(G_T\in\mathbb R^{n_T\times n_T}\).
Otherwise, if $A_T$ is only semidefinite, then one has to be more careful. For example, one can compute a pivoted Cholesky that produces a rectangular factor \(G_T \in \mathbb{R}^{m_T \times n_T} \) with \(m_T=\rank(A_T)\) in exact arithmetic, and with a numerically stable truncation in floating point.
In terms of iterative methods, regardless of whether $A_T$ is definite, an eigendecomposition can be used, with an explicit range factor obtained from 
\(A_T=Q_T\Lambda_T Q_T^{\top}\) by keeping only the positive spectrum: if \(\Lambda_{T,+}\) collects the positive eigenvalues and \(Q_{T,+}\) the corresponding eigenvectors, then \(G_T:=\Lambda_{T,+}^{1/2}Q_{T,+}^{\top}\in\mathbb R^{r_T\times n_T}\) satisfies \(A_T=G_T^{\top}G_T\),
with \(r_T=\rank(A_T)\). 
In general, observe that local Gram factors are not unique; for example, if \(A_T=G_T^{\top}G_T\) and \(U\) is orthogonal, then \((UG_T)^{\top}(UG_T)=A_T\). 
\subsection{A subtlety in defining the overlapping subdomains}
\label{sec:agg_closure}

The LS-AMG-DD subdomain construction of \eqref{eq:overlap_def} from \cref{sec:background} is based on local row information from the Gram factor \(G\), whereas a more standard Schwarz-style overlapping subdomain would be defined from the graph of the assembled matrix \(A=G^\top G\).  
These two viewpoints are closely related, but they are not equivalent in general, and as we now illustrate their difference can have a notable effect on the resulting LS-AMG-DD solver.  
To this end, we introduce the following two closures of an arbitrary aggregate \(\omega_i\).
Except for the comparison in \cref{tab:g_vs_a_closure} below, in this paper we use \cref{def:G_row_closure} to form the overlapping and interface sets, and not \cref{def:A_algebraic_closure}.

\begin{definition}[Algebraic \(A\)-closure of \(\omega_i\)]
\label{def:A_algebraic_closure}
The algebraic \(A\)-closure of \(\omega_i\) is
\[
\Omega_i^A
:=
\omega_i
\cup
\bigl\{
p\in\{1,\ldots,n\}\setminus\omega_i :
A(p,\omega_i)\neq 0
\bigr\},
\]
where \(A(p,\omega_i)\neq 0\) means that the row block \(A(p,\omega_i)\) is not identically zero.
We also define the corresponding algebraic \(A\)-interface set
\(
\Gamma_i^A := \Omega_i^A \setminus \omega_i.
\)
\end{definition}

\begin{definition}[\(G\)-row closure of \(\omega_i\)]
\label{def:G_row_closure}
Let \(\mathfrak R_{\omega_i}\) be the row-touching set of $\omega_i$ as in \eqref{eq:calR_def}.
The \(G\)-row closure of \(\omega_i\) is
\[
\Omega_i^G
:=
\bigcup_{j\in\mathfrak R_{\omega_i}}\operatorname{supp}(\bm g_j).
\]
We also define the corresponding \(G\)-row interface set
\(
\Gamma_i^G := \Omega_i^G\setminus\omega_i.
\)
\end{definition}

Practically speaking, the two closures \(\Omega_i^A\) and \(\Omega_i^G\) of $\omega_i$ are almost identical, but there is a subtle difference in terms of the assembled entries
of \(A=G^\top G\).  
For \(p,q\in\{1,\ldots,n\}\),
\(
    A(p,q)
    =
    \sum_{j=1}^m G(j,p)G(j,q).
\)
Thus, for fixed \(q\in\omega_i\), the \(A\)-closure detects those DOFs \(p\)
for which this assembled sum $A(p, q)$ is nonzero for at least one \(q\in\omega_i\).
In contrast, the \(G\)-row closure detects all DOFs \(p\) that appear together
with some \(q\in\omega_i\) in at least one row of \(G\), independently of
whether the corresponding contributions cancel in the assembled entry
\(A(p,q)\).  In this sense, \(\Omega_i^G\) records the potential distance-one
neighbors visible from rows of \(G\), whereas \(\Omega_i^A\)
records only the distance-one neighbors that remain after assembly, \(A = G^\top G\).

Both closures define reasonable overlapping subdomains for a Schwarz smoother; however, they are not both appropriate for constructing the LS-AMG-DD hierarchy. Specifically, if the \(A\)-closure of an aggregate is strictly smaller than its \(G\)-row closure, then the local matrices built on the \(A\)-closure do not
contain all of the row information used in that splitting. Consequently, the
exact SPSD splitting need not hold with the \(A\)-closure. The example in \cref{tab:g_vs_a_closure} shows that this situation can occur in practice and that the difference can have a substantial effect on the solver. We note that it is common practice to remove zero or near-zero entries from the sparsity graph of an assembled matrix $A$ \cite{anderson2021mfem,bell2023pyamg}. 

\begin{table}[h!]
\caption{Iteration counts for LS-AMG-DD applied to the scalar $H^1({\cal D})$ problem $-\div(\kappa\grad u)=0$ using either the algebraic $A$-closure in \cref{def:A_algebraic_closure} or the $G$-row closure in \cref{def:G_row_closure}. 
The discretization is CG(1) on a triangular mesh under uniform refinement. 
The spatial domain is ${\cal D}=(0,1)^2$, with $u=0$ on $\partial {\cal D}$ (imposed weakly via Nitsche's method), and the diffusion coefficient is $\kappa(x,y)=1+x+y$. 
The solver uses the same general setup as the tests in \cref{sec:numerics}, with the local eigenvalue threshold given by $\tau_{\rm cut}=\max_j {\rm mult}_{\omega}(j)$.}
\label{tab:g_vs_a_closure}
\centering
\small
\begin{tabular}{c|cccccc}
\hline
 & \multicolumn{6}{c}{DOFs} \\
closure
& 1,089 & 4,225 & 16,641 & 66,049 & 263,169 & 1,050,625 \\
\hline
\(A\)-closure     & 10 & 12 & 13 & 28 & 72 & 158 \\
\(G\)-row closure & 10 & 10 & 11 & 11 & 11 & 11  \\
\hline
\end{tabular}
\end{table}

\subsection{Geometric structure in overlapping Schwarz smoothers}
\label{sec:G_closure_geometry}

The importance of non-pointwise smoothers in multigrid methods is well known and studied, e.g. \cite{arnold2000hdivhcurlmg,claus2019saddlesmoothers,farrell2021pcpatch}, primarily in the geometric context. In this section we show that, under reasonable assumptions, if \(G\) arises from an element-local Gram construction, then the overlapping subdomains \(\{\Omega_i\}\) recover an \emph{element-induced} patch structure. Hence, a Schwarz smoother posed on the overlaps \(\{\Omega_i\}\) uses the same type of element-induced patch structure commonly used in geometric multigrid smoothers.
We first define the relevant element closure and then state the main theorem.

\begin{definition}[Element closure]
\label{def:element_closure_dofs}
For any element \(T\in\mathcal T_h\), let \(\mathcal N(T)\subseteq\{1,\dots,n\}\) denote the global DOF indices supported on \(T\). Let \(\operatorname{cl}_T(\omega):=\{\,T\in\mathcal T_h:\ \mathcal N(T)\cap\omega\neq\emptyset\,\}\) be the \emph{element closure} of any DOF set \(\omega\subseteq\{1,\dots,n\}\), and \(\Omega_{\mathrm{cl}}(\omega):=\bigcup_{T\in\operatorname{cl}_T(\omega)} \mathcal N(T)\subseteq\{1,\dots,n\}\) be the associated \emph{element-closure DOF set}.
\end{definition}

\begin{assumption}[Full-support local row per element]
\label{ass:element_full_support_row}
For each element \(T\in\mathcal T_h\), let \(A_T\in\mathbb R^{n_T\times n_T}\) be the local block from \cref{prop:conforming_elementwise_gram}, and let \(G_T\in\mathbb R^{m_T\times n_T}\) satisfy \(A_T=G_T^{\top}G_T\). Let \(\{\bm g_{T,j}\}_{j=1}^{m_T}\subseteq \mathbb R^{n_T}\) be such that the rows of \(G_T\) are \(\bm g_{T,j}^\top\).
We assume that for every \(T\in\mathcal T_h\), there exists an index \(q_T\in\{1,\dots,m_T\}\) such that \(\operatorname{supp}(\bm g_{T,q_T})=\{1,\dots,n_T\}\).
\end{assumption}

In words, \cref{ass:element_full_support_row} says that for each element \(T\), the chosen local Gram factor \(G_T\) has at least one row that is nonzero in every local DOF on \(T\).
Under this hypothesis, the next theorem shows that the algebraic \(G\)-row closure of \cref{def:G_row_closure} agrees with the geometric element closure of \cref{def:element_closure_dofs}.
A schematic example is shown in \cref{fig:triangle_hcurl_closure}, where a small aggregate DOF set \(\omega\) on a triangular \(\hcurl\) discretization enlarges to the element-closure DOF set \(\Omega_{\mathrm{cl}}(\omega)\), which under \cref{ass:element_full_support_row} coincides with \(\Omega^G(\omega)\).\footnote{\Cref{ass:element_full_support_row} does not hold automatically in all cases, but it can typically be enforced numerically. Suppose it does not hold for $G_T$, then one can typically find an orthogonal matrix $U$ such that it does hold for \(\wh{G}_T = U G_T\), noting that $\wh{G}_T^\top \wh{G}_T = (U G_T)^\top (U G_T) = G_T^\top G_T$.}

\begin{figure}[t!]
\centering
\begin{tikzpicture}[scale=1.2, line cap=round, line join=round]

\colorlet{ambientdof}{black}
\colorlet{aggdof}{red!75!black}
\colorlet{closedof}{blue!70!black}

\def\FilledDot#1{\fill[ambientdof] #1 circle (1.15pt);}
\def\OpenDot#1{\draw[closedof, line width=0.55pt, fill=white] #1 circle (1.5pt);}
\def\XMark#1{%
  \begin{scope}[shift={#1}]
    \draw[aggdof, line width=0.8pt] (-0.085,-0.085) -- (0.085,0.085);
    \draw[aggdof, line width=0.8pt] (-0.085,0.085) -- (0.085,-0.085);
  \end{scope}}

\def\EdgeDOFs#1#2#3{%
  #3{($(#1)!0.32!(#2)$)}
  #3{($(#1)!0.68!(#2)$)}}

\def\InteriorDOFs#1#2#3#4{%
  \begin{scope}[shift={(barycentric cs:#1=1,#2=1,#3=1)}]
    #4{(-0.11,0)}
    #4{(0.11,0)}
  \end{scope}}

\def\TriDOFs#1#2#3#4{%
  \EdgeDOFs{#1}{#2}{#4}
  \EdgeDOFs{#2}{#3}{#4}
  \EdgeDOFs{#3}{#1}{#4}
  \InteriorDOFs{#1}{#2}{#3}{#4}}

\def\SetMeshCoords{%
  \coordinate (A) at (0.0,0.0);
  \coordinate (B) at (1.5,0.0);
  \coordinate (C) at (3.0,0.0);
  \coordinate (D) at (0.0,1.3);
  \coordinate (E) at (1.5,1.3);
  \coordinate (F) at (3.0,1.3);
  \coordinate (G) at (0.0,2.6);
  \coordinate (H) at (1.5,2.6);
  \coordinate (I) at (3.0,2.6);}

\def\DrawMesh{%
  \draw[line width=0.4pt] (A)--(B)--(C);
  \draw[line width=0.4pt] (D)--(E)--(F);
  \draw[line width=0.4pt] (G)--(H)--(I);
  \draw[line width=0.4pt] (A)--(D)--(G);
  \draw[line width=0.4pt] (B)--(E)--(H);
  \draw[line width=0.4pt] (C)--(F)--(I);
  \draw[line width=0.4pt] (A)--(E);
  \draw[line width=0.4pt] (B)--(F);
  \draw[line width=0.4pt] (D)--(H);
  \draw[line width=0.4pt] (E)--(I);}

\def\ShadeTouchedElementsB{%
  \fill[aggdof!10] (A)--(B)--(E)--cycle;
  \fill[aggdof!10] (B)--(F)--(E)--cycle;}
\def\ShadeTouchedElementsC{%
  \fill[closedof!10] (A)--(B)--(E)--cycle;
  \fill[closedof!10] (B)--(F)--(E)--cycle;}

\begin{scope}[shift={(0,0)}]
  \SetMeshCoords
  \DrawMesh

  \TriDOFs{A}{B}{E}{\FilledDot}
  \TriDOFs{A}{E}{D}{\FilledDot}
  \TriDOFs{B}{C}{F}{\FilledDot}
  \TriDOFs{B}{F}{E}{\FilledDot}
  \TriDOFs{D}{E}{H}{\FilledDot}
  \TriDOFs{D}{H}{G}{\FilledDot}
  \TriDOFs{E}{F}{I}{\FilledDot}
  \TriDOFs{E}{I}{H}{\FilledDot}

  \node[font=\footnotesize] at (1.5,-0.42) {(a) ambient local DOF layout};
\end{scope}

\begin{scope}[shift={(3.8,0)}]
  \SetMeshCoords
  \ShadeTouchedElementsB
  \DrawMesh

  \EdgeDOFs{B}{E}{\XMark}

  \node[font=\footnotesize] at (1.5,-0.42) {(b) aggregate DOF set \(\omega\)};
\end{scope}

\begin{scope}[shift={(7.6,0)}]
  \SetMeshCoords
  \ShadeTouchedElementsC
  \DrawMesh

  \EdgeDOFs{A}{B}{\OpenDot}
  \EdgeDOFs{B}{E}{\OpenDot}
  \EdgeDOFs{E}{A}{\OpenDot}
  \EdgeDOFs{B}{F}{\OpenDot}
  \EdgeDOFs{F}{E}{\OpenDot}
  \InteriorDOFs{A}{B}{E}{\OpenDot}
  \InteriorDOFs{B}{F}{E}{\OpenDot}

  \node[font=\footnotesize] at (1.5,-0.42) {(c) \(\Omega_{\mathrm{cl}}(\omega)=\Omega^G(\omega)\)};
\end{scope}

\end{tikzpicture}
\caption{Illustration of \cref{thm:elem_closure} for a conforming \(\hcurl\) discretization using higher-order first-family N\'ed\'elec elements. Each triangular element consists of two tangential moments on each edge and two cell-interior DOFs. Filled black circles represent the local DOF layout, red crosses represent the chosen aggregate DOF set \(\omega\), and open blue circles represent the resulting closure. In panel (b), the aggregate is chosen as the two edge-based DOFs supported on a single interior mesh edge; the lightly shaded triangles are the touched-element set \(\operatorname{cl}_T(\omega)\). Panel (c) shows the associated element-closure DOF set \(\Omega_{\mathrm{cl}}(\omega)\), obtained by taking all DOFs supported on those touched elements. 
}
\label{fig:triangle_hcurl_closure}
\vspace{-4ex}
\end{figure}

\begin{theorem}
\label{thm:elem_closure}
Let \(A=G^{\top}G\) be assembled from the blocks \(G_TR_T\), \(T\in\mathcal T_h\) as in \cref{prop:conforming_elementwise_gram}. For any DOF set \(\omega\subset\{1,\dots,n\}\), let \(\Omega^G(\omega)\) denote its \(G\)-row closure from \cref{def:G_row_closure}, and let \(\Omega_{\mathrm{cl}}(\omega)\) denote its element-closure DOF set from \cref{def:element_closure_dofs}. Then \(\Omega^G(\omega)\subseteq \Omega_{\mathrm{cl}}(\omega)\). If, in addition, \cref{ass:element_full_support_row} holds, then \(\Omega^G(\omega)=\Omega_{\mathrm{cl}}(\omega)\).
\end{theorem}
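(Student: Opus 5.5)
The plan is to work directly with the row structure of the stacked factor \(G\) from \cref{prop:conforming_elementwise_gram}. Every row of \(G\) is a row of some block \(G_TR_T\), namely \(\bm g_{T,j}^\top R_T\) for an element \(T\) and a local row index \(j\in\{1,\dots,m_T\}\). Its global DOF support is the image of the local support \(\operatorname{supp}(\bm g_{T,j})\subseteq\{1,\dots,n_T\}\) under the local-to-global map of \(T\). Since \(R_T^\top\) embeds into the coordinates \(\mathcal N(T)\), this gives \(\operatorname{supp}(R_T^\top\bm g_{T,j})\subseteq\mathcal N(T)\). I would state this as a preliminary observation, noting that \(R_T\) is Boolean with distinct rows, so local indices map injectively into \(\mathcal N(T)\) and no cancellation can occur.

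For the inclusion \(\Omega^G(\omega)\subseteq\Omega_{\mathrm{cl}}(\omega)\), take \(p\in\Omega^G(\omega)\). Then there is a row \(j\in\mathfrak R_\omega\) with \(p\in\operatorname{supp}(\bm g_j)\). This row comes from some element \(T\), so \(\operatorname{supp}(\bm g_j)\subseteq\mathcal N(T)\). Because \(j\in\mathfrak R_\omega\), the support meets \(\omega\), and therefore \(\mathcal N(T)\cap\omega\neq\emptyset\), i.e.\ \(T\in\operatorname{cl}_T(\omega)\). Hence \(p\in\mathcal N(T)\subseteq\Omega_{\mathrm{cl}}(\omega)\).

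For the reverse inclusion under \cref{ass:element_full_support_row}, take \(p\in\Omega_{\mathrm{cl}}(\omega)\). Then \(p\in\mathcal N(T)\) for some \(T\) with \(\mathcal N(T)\cap\omega\neq\emptyset\). Consider the global row \(\bm g:=R_T^\top\bm g_{T,q_T}\) of \(G\). By the assumption and the injectivity of the local-to-global map, its support is exactly \(\mathcal N(T)\). This support meets \(\omega\), so the row lies in \(\mathfrak R_\omega\), and its support contains \(p\). Thus \(p\in\Omega^G(\omega)\).

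The main point needing care is not either inclusion. It is the bookkeeping identity \(\operatorname{supp}(R_T^\top\bm g_{T,j})=\{\text{global index of local } k : k\in\operatorname{supp}(\bm g_{T,j})\}\). This identity relies on each element's local DOFs being distinct global DOFs, so that \(R_T\) is a genuine restriction. If a conforming space identified two local DOFs of one element, or assembled with sign flips (as with orientations in \(\hcurl\)/\(\hdiv\)), then \(R_T\) would be a signed or non-injective map. I would handle signs by noting that they do not change supports. I would exclude non-injective identification by the standing convention that \(R_T\) is Boolean restriction to \(\mathcal N(T)\) with \(n_T=|\mathcal N(T)|\).
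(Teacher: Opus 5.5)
Your proof is correct and follows essentially the same argument as the paper. For the forward inclusion, each row of \(G\) lies in some block \(G_TR_T\), so it is supported in \(\mathcal N(T)\), and touching \(\omega\) forces \(T\in\operatorname{cl}_T(\omega)\); for the reverse inclusion, the full-support row \(\bm g_{T,q_T}^\top R_T\) has global support exactly \(\mathcal N(T)\), which places every DOF of a touched element in \(\Omega^G(\omega)\). Your remark about injectivity and signs of the local-to-global map only makes explicit what the paper assumes by taking \(R_T\) to be a Boolean restriction; it is not a different route.
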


\begin{proof}

Fix a DOF set \(\omega\subset\{1,\dots,n\}\).

We first show that \(\Omega^G(\omega)\subseteq \Omega_{\mathrm{cl}}(\omega)\). Consider a DOF \(j\in\Omega^G(\omega)\). By definition of the \(G\)-row closure, there exists a row index \(r\in\mathcal R_\omega\) such that \(j\in\operatorname{supp}(\bm g_r)\). Since \(G\) is obtained by stacking the element blocks \(\wh G_T:=G_TR_T\), the row indexed by \(r\) belongs to some element block \(\wh G_T\), where \(T\in\mathcal T_h\). Every row of \(\wh G_T\) is of the form \(\bm g_{T,\ell}^\top R_T\), for vectors \(\bm g_{T,\ell} \in \mathbb{R}^{n_T}\), and is therefore supported in \(\mathcal N(T)\). Hence \(j\in\mathcal N(T)\). 
Moreover, because \(r\in\mathcal R_\omega\), the row indexed by \(r\) touches \(\omega\), so there exists a DOF \(q\in\omega\cap\operatorname{supp}(\bm g_r)\subseteq\omega\cap\mathcal N(T)\). Thus \(\mathcal N(T)\cap\omega\neq\emptyset\), which means that \(T\in\operatorname{cl}_T(\omega)\) by \cref{def:element_closure_dofs}. Therefore \(j\in\mathcal N(T)\subseteq\Omega_{\mathrm{cl}}(\omega)\).

Now assume \cref{ass:element_full_support_row}. We show that \(\Omega_{\mathrm{cl}}(\omega)\subseteq\Omega^G(\omega)\). Consider a DOF \(j\in\Omega_{\mathrm{cl}}(\omega)\). Then \(j\in\mathcal N(T)\) for some element \(T\in\operatorname{cl}_T(\omega)\). Since \(T\in\operatorname{cl}_T(\omega)\), there exists a DOF \(q\in\omega\cap\mathcal N(T)\). By \cref{ass:element_full_support_row}, there exists a row index \(q_T\in\{1,\dots,m_T\}\) such that the row \(\bm g_{T,q_T}^\top\) of \(G_T\) satisfies \(\operatorname{supp}(\bm g_{T,q_T})=\{1,\dots,n_T\}\). Hence the corresponding row \(\bm g_{T,q_T}^\top R_T\) of the element block \(\wh G_T\) has global support exactly \(\mathcal N(T)\). In particular, this row contains both the DOF \(q\in\omega\) and the DOF \(j\in\mathcal N(T)\). Its global row index therefore lies in \(\mathcal R_\omega\), and so \(j\in\Omega^G(\omega)\).
Thus, combining the unconditional inclusion \(\Omega^G(\omega)\subseteq \Omega_{\mathrm{cl}}(\omega)\) with the conditional inclusion \(\Omega_{\mathrm{cl}}(\omega)\subseteq\Omega^G(\omega)\) yields \(\Omega^G(\omega)=\Omega_{\mathrm{cl}}(\omega)\) when \cref{ass:element_full_support_row} holds.
\end{proof}

\section{Numerical results}
\label{sec:numerics}

This section tests the LS-AMG-DD solver on several conforming finite-element discretizations, which are generally known to be challenging or intractable for classical AMG methods. The finite elements are implemented in Firedrake \cite{FiredrakeUserManual}, with the global Gram factor $G$ constructed using broken finite-element spaces for practical reasons, as detailed in \cref{subsec:numerical_gram_construction}. 
All tests use standalone LS-AMG-DD V-cycles with forward
multiplicative Schwarz pre-smoothing and reverse-order post-smoothing.\footnote{RAS smoothing could be used as in \cite{southworth2026lsamgdd}, which we find often yields comparable multilevel convergence; however, there are cases where RAS is not a convergent smoother \cite{krzysik2026theory}, so we avoid it here for simplicity.}  %
The initial iterate is a random vector, and the iteration is stopped when the residual vector in the \(\ell^2\) norm has been reduced by \(10^{-10}\) relative to its initial value.
All tests use two passes of standard aggregation to build the disjoint aggregates \eqref{eq:agg_def}, and they use the adaptive local eigenvalue threshold \(\tau_{\rm cut} \propto \max_j {\rm mult}_{\omega}(j)\) proposed in \eqref{eq:tau-cut}, with a constant of proportionality that we vary slightly between different problems. In general, we find that as this constant of proportionality varies slightly around one, neither the operator complexity nor coarsening ratio of the solver varies dramatically, but slightly decreasing it can lead to a marked improvement in iteration counts, e.g., by a factor of two.
The maximum number of levels in the hierarchy is capped at three, and we coarsen until there are fewer than 10 DOFs or the nonzero density of the current operator exceeds 25\%.  
In the tests we report the operator complexity of the multilevel hierarchy, defined as
\(
    \sum_{\ell}\operatorname{nnz}(A_\ell) /
         \operatorname{nnz}(A_0),
\)
where \(A_\ell\) denotes the operator on level \(\ell \in \{0, 1, 2\}\).
The implementation is built on PyAMG \cite{bell2023pyamg}.

In
\cref{subsec:hdiv_graddiv_problem,subsec:h2_hyperdiffusion_problem,subsec:h1_vector_linear_elasticity_problem} we report iteration counts and operator complexities for LS-AMG-DD applied to \(H(\div)\) grad--div problems, scalar \(H^2\)
anisotropic hyperdiffusion problems, and vector \(H^1\) elasticity problems. We present performance across mesh refinement, finite-element order, and relevant physical parameters of reaction constant, anisotropy ratio, and Poisson ratio, respectively. To compare against a standard AMG baseline, we also show comparisons against two-level classical Ruge--Stuben (RS) AMG, and root-node (RN) AMG \cite{manteuffel2017root} on the homogeneous algebraic problem, \(A \bm x= \bm 0\).
Both AMG baselines use the PyAMG implementations, with the RN AMG solver using the default PyAMG settings, except that we use a classical strength-of-connection measure with threshold $\theta = 0.5$.
The RS AMG solver uses the same strength measure as RN, classical interpolation, second-pass RS coarsening, forward Gauss--Seidel pre-smoothing, and backward Gauss--Seidel post-smoothing.  
These results are presented to emphasize that each problem we chose is indeed hard or intractable for strong black-box AMG methods. Although for each class of problem there exist specialized, problem-dependent AMG or multigrid solvers that may perform better than black-box AMG, they would almost certainly not generalize across problem class, and it is unclear if they would be robust to the variations we consider within each class, such as a 4th-order operator with high-order elements and anisotropy, or elasticity with large Poisson ratio and high order elements. 

\begin{table}[t!]
\centering
\small
\begin{tabular}{llrrrrrrrr}
\hline
PDE & Regime & \(n_0\) & \(n_1\) & \(n_2\) &
\(n_0/n_1\) & \(n_1/n_2\) & OC & GC & Its. \\
\hline
\(\hdiv\) & easy &
\(280{,}800\) & \(48{,}312\) & \(6{,}085\) &
\(5.8\) & \(7.9\) & \(4.5\) & \(1.2\) & \(14\) \\
\(\hdiv\) & hard &
\(280{,}800\) & \(43{,}956\) & \(5{,}879\) &
\(6.4\) & \(7.5\) & \(4.1\) & \(1.2\) & \(14\) \\
\hline
\(H^2\) & easy &
\(242{,}406\) & \(31{,}062\) & \(2{,}921\) &
\(7.8\) & \(10.6\) & \(2.5\) & \(1.1\) & \(19\) \\
\(H^2\) & hard &
\(242{,}406\) & \(46{,}653\) & \(5{,}224\) &
\(5.2\) & \(8.9\) & \(4.8\) & \(1.2\) & \(24\) \\
\hline
vector \(H^1\) & easy &
\(181{,}502\) & \(37{,}474\) & \(3{,}631\) &
\(4.8\) & \(10.3\) & \(6.0\) & \(1.2\) & \(12\) \\
vector \(H^1\) & hard &
\(181{,}502\) & \(52{,}406\) & \(5{,}127\) &
\(3.5\) & \(10.2\) & \(10.8\) & \(1.3\) & \(12\) \\
\hline
\end{tabular}
\caption{For representative LS-AMG-DD hierarchies, the table reports problem sizes on levels 0, 1, and 2, coarsening ratios, operator complexities (OC), grid complexities (GC), with GC defined as $(n_0 + n_1 + n_2)/n_0$, and iteration counts (Its.) 
For each PDE type the parameter regime is varied from ``easy'' to ``hard.''
The \(\hdiv\) cases use BDM elements of degree \(3\), with grad--div weight \(\alpha=10^{-3}\) in the easy
regime and \(\alpha=10^3\) in the hard regime.  
The \(H^2\) cases use Bell elements and the S-curve field, with anisotropy ratio $\epsilon = 1$ in the easy regime and $\epsilon = 10^{-6}$ in the hard regime.
The vector \(H^1\) cases use degree \(3\) vector CG elements, with the easy and hard regimes corresponding to Poisson ratios of 
\(\lambda/\mu=1\) and \(\lambda/\mu=499\), respectively.}
\label{tab:lsdd_coarsening_ratios}
\end{table}

While the following three subsections provide detailed problem discussion and results, we first present \cref{tab:lsdd_coarsening_ratios}, showing level-specific problem sizes and coarsening ratios for easier and harder versions of the three PDE problems considered. 
The local spectral tolerance enforces a certain coarse-grid quality on all levels and robust convergence, which generally drives higher operator complexities in harder realizations of the problems considered. Nevertheless, the overall hierarchies and iteration counts remain fairly similar across problem size and type, despite significant changes in structure and difficulty of the underlying problems. We acknowledge that the coarsest grids are relatively large, but point out that dense linear algebra on modern architectures is extremely fast, and due to significant fill-in, further coarsening is not advantageous from a performance perspective. Moreover, almost all of these problems are shown to be intractable for standard black-box AMG solvers, even as two-level methods.

\subsection{Vector \texorpdfstring{\(\hdiv\)}{H(div)}: grad--div}
\label{subsec:hdiv_graddiv_problem}

First we consider an SPD vector problem in \(H(\div;\mathcal D)\).
Let \(\mathcal D=(0,1)^d\) with \(d=2\), and define
\[
    H(\div;\mathcal D)
    =
    \{\bm v\in L^2(\mathcal D)^d:\div\bm v\in L^2(\mathcal D)\}.
\]
For constants \(\alpha, \beta>0\), the strong form of the problem we consider is
\begin{equation}
\label{eq:hdiv_strong_graddiv_problem}
    -\grad(\alpha\div\bm u)+\beta\bm u=\bm f
    \quad\text{in }\mathcal D,
    \qquad
    \bm u\cdot\bm n=0
    \quad\text{on }\partial\mathcal D .
\end{equation}
Testing \eqref{eq:hdiv_strong_graddiv_problem} with
\(\bm v\in H(\div;\mathcal D)\) and integrating by parts gives
\[
    \alpha(\div\bm u,\div\bm v)_\mathcal D
    +
    \beta(\bm u,\bm v)_\mathcal D
    -
    \alpha \langle \div\bm u,\bm v\cdot\bm n\rangle_{\partial\mathcal D}
    =
    (\bm f,\bm v)_\mathcal D .
\]
Here, \(\langle \cdot,\cdot\rangle_{\partial\mathcal D}\) denotes the
\(L^2(\partial\mathcal D)\) integral
\(\langle r,s\rangle_{\partial\mathcal D}
=\int_{\partial\mathcal D}rs\,ds\).
As discussed in \cref{sec:elemt_gram_stacking}, essential boundary conditions are imposed weakly in all of our tests. 
Thus, the discrete trial and test functions are taken from the unrestricted \(H(\div;\mathcal D)\)-conforming finite-element space \(V_h\) and the bilinear form is augmented by a symmetric Nitsche boundary penalty formulation corresponding to weakly enforced boundary conditions. 
Specifically, the discretized problem is: find \(\bm u_h\in V_h\) such that
\(
    a_h(\bm u_h,\bm v_h)=\ell_h(\bm v_h)
    \;
    \forall \bm v_h\in V_h,
\)
with \(\ell_h(\bm v_h)=(\bm f,\bm v_h)_\mathcal D\) and
\begin{align}
\label{eq:hdiv_weak_penalty_form}
    a_h(\bm u_h,\bm v_h)
    :={}&
    \alpha(\div\bm u_h,\div\bm v_h)_\mathcal D
    +
    \beta(\bm u_h,\bm v_h)_\mathcal D
    +
    \langle \gamma\,\bm u_h\cdot\bm n,\bm v_h\cdot\bm n\rangle_{\partial\mathcal D}
    \nonumber\\
    &-
    \alpha \langle \div\bm u_h,\bm v_h\cdot\bm n\rangle_{\partial\mathcal D}
    -
    \alpha \langle \div\bm v_h,\bm u_h\cdot\bm n\rangle_{\partial\mathcal D}.
\end{align}
On each boundary facet \(F\), with diameter \(h_F\), we take
\(
    \gamma_F=\frac{16 \alpha p^2}{h_F},
\)
where \(p\) is the polynomial degree of the conforming finite element.

\begin{figure}[t!]
    \centering
    \includegraphics[width=0.95\linewidth]{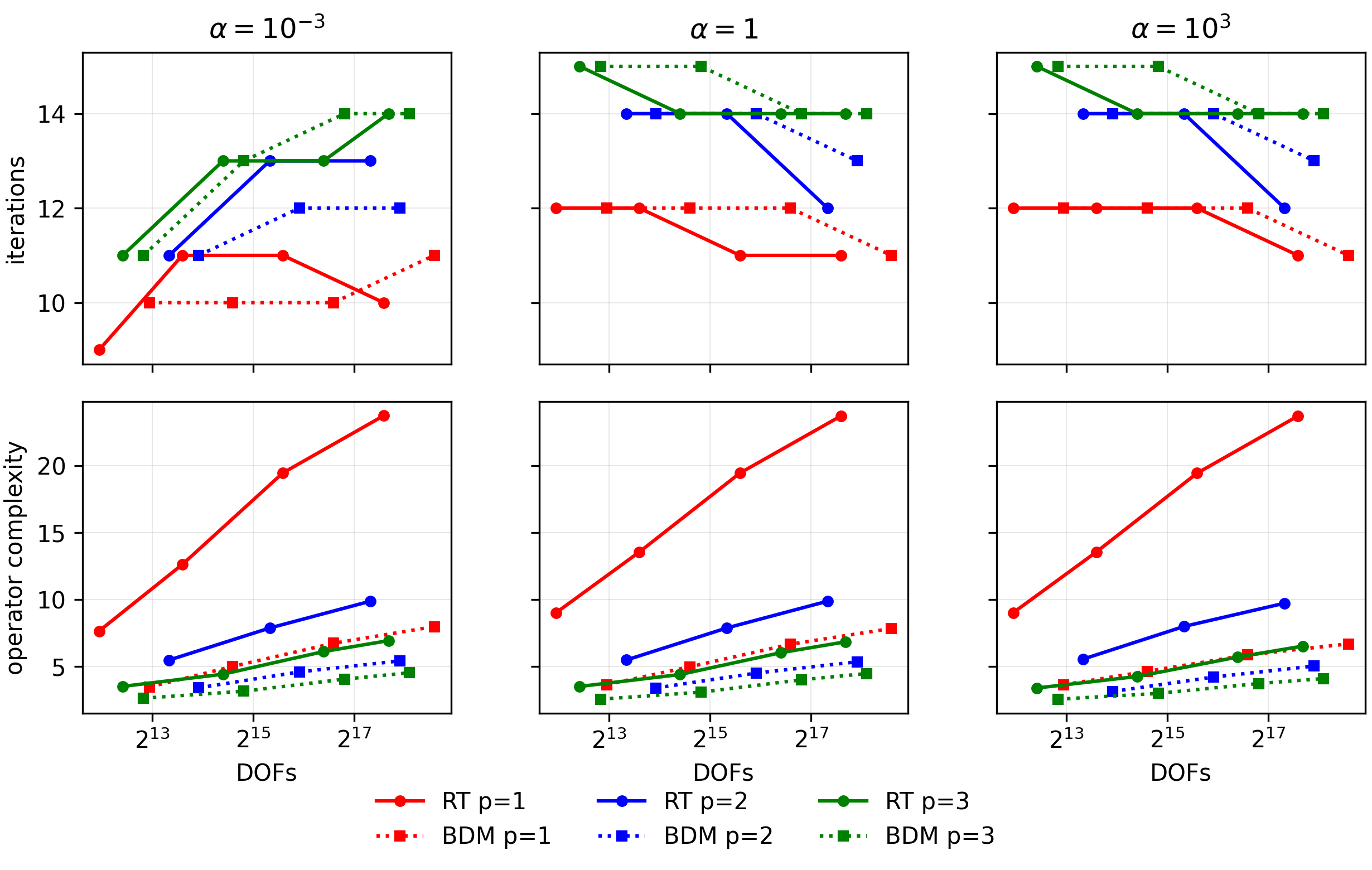}
    \caption{Two-dimensional \(H(\div;\mathcal D)\) grad--div results for RT
    and BDM discretizations.  The columns correspond to
    \(\alpha=10^{-3},1,10^3\).  The top row shows iteration counts and the
    bottom row shows operator complexity.}
    \label{fig:hdiv_results}
\end{figure}

In the tests below, the source term is chosen so that
\(\bm u_\star=-\grad(\cos(\pi x)\cos(\pi y))\) is the exact solution of the PDE, namely
\(\bm f=-\grad(\alpha\div\bm u_\star)+\beta\bm u_\star\).
The discretization uses triangular meshes and conforming Raviart--Thomas (RT)
and Brezzi--Douglas--Marini (BDM) spaces. With the Firedrake degree convention,
\(\mathrm{RT}_p\) contains vector polynomials through degree \(p-1\) together
with the usual Raviart--Thomas enrichment, whereas
\(\mathrm{BDM}_p=\mathbb P_p(T)^2\) contains all vector polynomials of degree
\(p\) on each triangle \(T\). 
We fix \(\beta=1\) in \eqref{eq:hdiv_weak_penalty_form} and vary
\(\alpha\in\{10^{-3},1,10^3\}\), so that \(\alpha\) controls the strength of
the grad--div term relative to the reaction term in \eqref{eq:hdiv_strong_graddiv_problem}.
In the tests we use the adaptive cutoff
\(
    \tau_{\rm cut}=0.75\max_j {\rm mult}_{\omega}(j).
\)

Numerical results are shown in \cref{fig:hdiv_results}, where we report
iteration counts and operator complexity as functions of the total number of DOFs. 
Overall, across RT and BDM spaces, all values of $\alpha$, and orders 1--3, we see fast and robust convergence of LS-AMG-DD as a standalone solver. 
Iteration counts are nearly identical for equal-order RT and BDM discretizations, and are largely scalable in both \(\alpha\) and mesh spacing, with mild growth in operator complexities as the mesh is refined, with the exception of lowest-order RT, which is relatively larger and grows much more quickly.
We observe some mild increase in iteration count with increasing \(p\), but simultaneously lower operator complexities, particularly for RT, where the $p=1$ operator complexity is significantly larger than $p\in\{2,3\}$.

\begin{figure}[t!]
    \centering
    \includegraphics[width=0.85\linewidth]{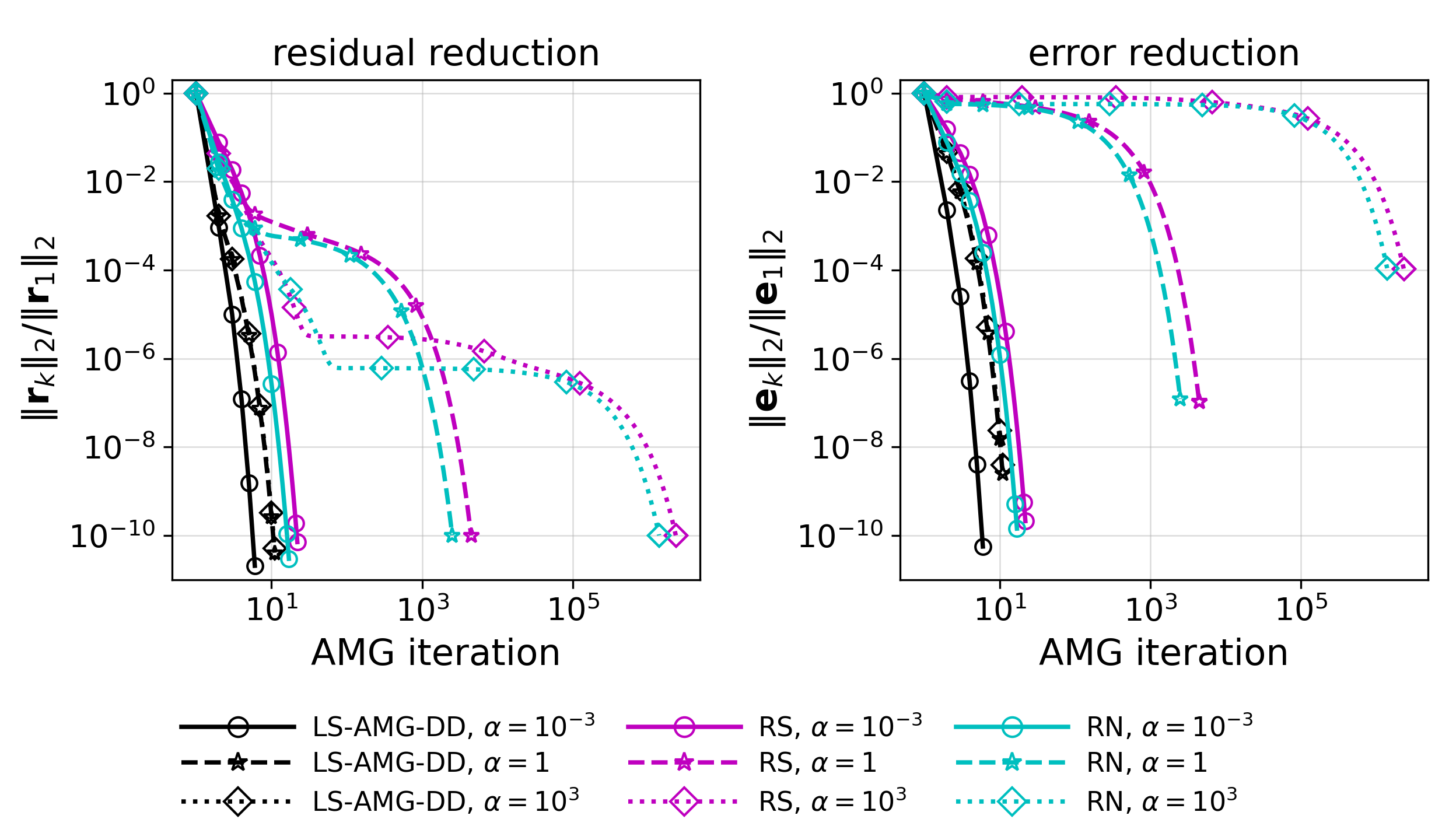}
    \caption{Comparison of LS-AMG-DD with two baseline AMG solvers for the grad--div problem using BDM $p=1$ elements. The number of DOFs is 416.}
    \label{fig:hdiv_pyamg_baseline}
\end{figure}

In \cref{fig:hdiv_pyamg_baseline} we show a comparison of LS-AMG-DD with two-level RS AMG, and RN AMG solvers for a fixed small mesh size and varying $\alpha \in \{ 10^{-3}, 1, 10^{3} \}$. Note that the number of DOFs in \cref{fig:hdiv_pyamg_baseline} is only 416, while the smallest BDM$_1$ results shown in \cref{fig:hdiv_results} use 6272 DOFs.
Evidently, the latter two solvers outright fail for $\alpha \in \{ 1,10^3 \}$, with the iterations increasing dramatically with $\alpha$, taking more than $10^6$ iterations for $\alpha = 10^3$, while those of LS-AMG-DD are $\leq 10$ and largely independent of $\alpha$. Moreover, despite obtaining comparable final relative residuals, the error achieved by LS-AMG-DD for $\alpha = 10^3$ is \emph{four orders of magnitude} smaller than that obtained by RS AMG or RN AMG. This is in contrast to $\alpha=10^{-3}$, where RS AMG and RN AMG converge comparably to LS-AMG-DD and also obtain comparable errors. The discrepancy in error for the ill-conditioned regime of large $\alpha$ emphasizes the importance of a robust preconditioner that can uniformly attenuate all error modes in obtaining an accurate solution to ill-conditioned problems.
\subsection{Scalar \texorpdfstring{\(H^2\)}{H2}: anisotropic hyperdiffusion}
\label{subsec:h2_hyperdiffusion_problem}

We next consider a fourth-order anisotropic biharmonic problem in \(H^2(\mathcal D)\), obtained by ``squaring'' an anisotropic diffusion operator. Let \(\mathcal D=(0,1)^2\), and define
\[
    H^2(\mathcal D)
    =
    \{v\in L^2(\mathcal D):
    \partial^\alpha v\in L^2(\mathcal D)
    \text{ for every }
    \alpha\in\mathbb N_0^2,\ |\alpha|\le 2\}.
\]
Let \(\bm b:\mathcal D\to\mathbb R^2\) be a unit direction field,
\(\|\bm b(\bm x)\|_2=1\), and define the diffusion tensor
\begin{equation}
\label{eq:h2_diffusion_tensor}
    D(\bm x)
    =
    \epsilon I
    +
    (1-\epsilon)\bm b(\bm x)\bm b(\bm x)^{\top},
    \qquad 0<\epsilon\le 1 .
\end{equation}
Then \(\epsilon=1\) gives the isotropic tensor, while \(\epsilon < 1\)
gives a tensor representing unit diffusion parallel to \(\bm b\) and diffusion of size \(\epsilon\)
in directions orthogonal to \(\bm b\).  

Define the second-order operator
\(q(w)=\operatorname{div}(D\operatorname{grad} w)\).  The strong version of the fourth-order problem we consider is then
\begin{align} \label{eq:h2_strong}
    \operatorname{div}
    \bigl(
    D\operatorname{grad} 
    \operatorname{div}(D\operatorname{grad} u) 
    \bigr)
    =
    q(q(u))=f
    \;\text{in }\mathcal D,
    \qquad
    u=0,\quad \partial_{\bm n}u=0
    \;\text{on }\partial\mathcal D,
\end{align}
where \(\partial_{\bm n}w=\operatorname{grad}w\cdot\bm n\).
Note that for $D = I$, this is the standard biharmonic equation,
\(
\Delta^2 u = f.
\)
We point the interested reader to \cite{krzysik2026bras} wherein we apply LS-AMG-DD to a Schur complement arising from a mixed discretization of a different fourth-order operator in the context of studying mass-matrix preconditioning.
We also write
\(\partial_{D,\bm n}w=(D\operatorname{grad}w)\cdot\bm n\) 
for the conormal derivative associated with \(q\).  Testing with \(v \in H^2({\cal D})\) and integrating by parts twice gives
\begin{align*}
    (q(u),q(v))_{\mathcal D}
    -
    \langle q(u),\partial_{D,\bm n}v\rangle_{\partial\mathcal D}
    +
    \langle \partial_{D,\bm n}q(u),v\rangle_{\partial\mathcal D}
    =
    (f,v)_{\mathcal D}.
\end{align*}
Here we use the unrestricted conforming space \(V_h\subset H^2(\mathcal D)\), and impose the clamped data weakly by a symmetric Nitsche formulation: find \(u_h\in V_h\) such that
\(
    a_h(u_h,v_h)=\ell_h(v_h)
    \; \forall v_h\in V_h,
\)
with \(\ell_h(v_h)=(f,v_h)_{\mathcal D}\) and
\begin{align*}
    a_h(u_h,v_h)
    :={}&
    (q(u_h),q(v_h))_{\mathcal D}
    +
    \langle \gamma_0 u_h,v_h\rangle_{\partial\mathcal D}
    +
    \langle \gamma_1\partial_{\bm n}u_h,\partial_{\bm n}v_h\rangle_{\partial\mathcal D}
    \\
    &-
    \langle q(u_h),\partial_{D,\bm n}v_h\rangle_{\partial\mathcal D}
    +
    \langle \partial_{D,\bm n}q(u_h),v_h\rangle_{\partial\mathcal D}
    \\
    &-
    \langle q(v_h),\partial_{D,\bm n}u_h\rangle_{\partial\mathcal D}
    +
    \langle \partial_{D,\bm n}q(v_h),u_h\rangle_{\partial\mathcal D}.
\end{align*}
The final two lines contain the consistency and adjoint-consistency terms, written in paired form to make the symmetry of \(a_h\) explicit.  On each boundary facet \(F\), with diameter \(h_F\), we take
\(
    \gamma_{0,F}
    =
    \tfrac{24p^4}{h_F^3} \|D\bm n\|_2^2,
\)
and
\(
    \gamma_{1,F}
    =
    \tfrac{24p^2}{h_F} \|D\bm n\|_2^2,
\)
where \(p\) is the polynomial degree.

In the numerical tests, the source is \(f(\bm x)=\exp(-\|\bm x-(0.5,0.5)\|^2/(2\sigma^2))\), with \(\sigma=0.045\).
Further, we consider the diffusion tensor \eqref{eq:h2_diffusion_tensor} with the following two direction fields
\begin{equation}
\label{eq:h2_bfield_cases}
    \bm b(\bm x)
    =
    \begin{cases}
    (\cos\theta,\sin\theta)^{\top},
        & \text{constant direction},
    \\
    \displaystyle
    \frac{(1,y_c'(x))^{\top}}{\sqrt{1+(y_c'(x))^2}},
    \qquad
    y_c(x)=x+a_S\sin(2\pi x),
        & \text{S-curve direction}.
    \end{cases}
\end{equation}
Specifically, we use \(\theta=\pi/6\) for the constant,
non-mesh-aligned direction and \(a_S=0.5\) for the S-curve direction.
The discretization uses triangular meshes and conforming \(C^1\) scalar finite-element spaces \(V_h\subset H^2(\mathcal D)\), given by  Argyris and Bell elements, corresponding to $p = 5$ \cite{kirby2019code}.  
\Cref{fig:h2_sol} shows numerical solutions to the problem; evidently the Gaussian blob is diffused much more strongly parallel to \(\bm b\) than orthogonal to it.

\begin{figure}[t!]
    \centering
    \includegraphics[width=0.425\linewidth]{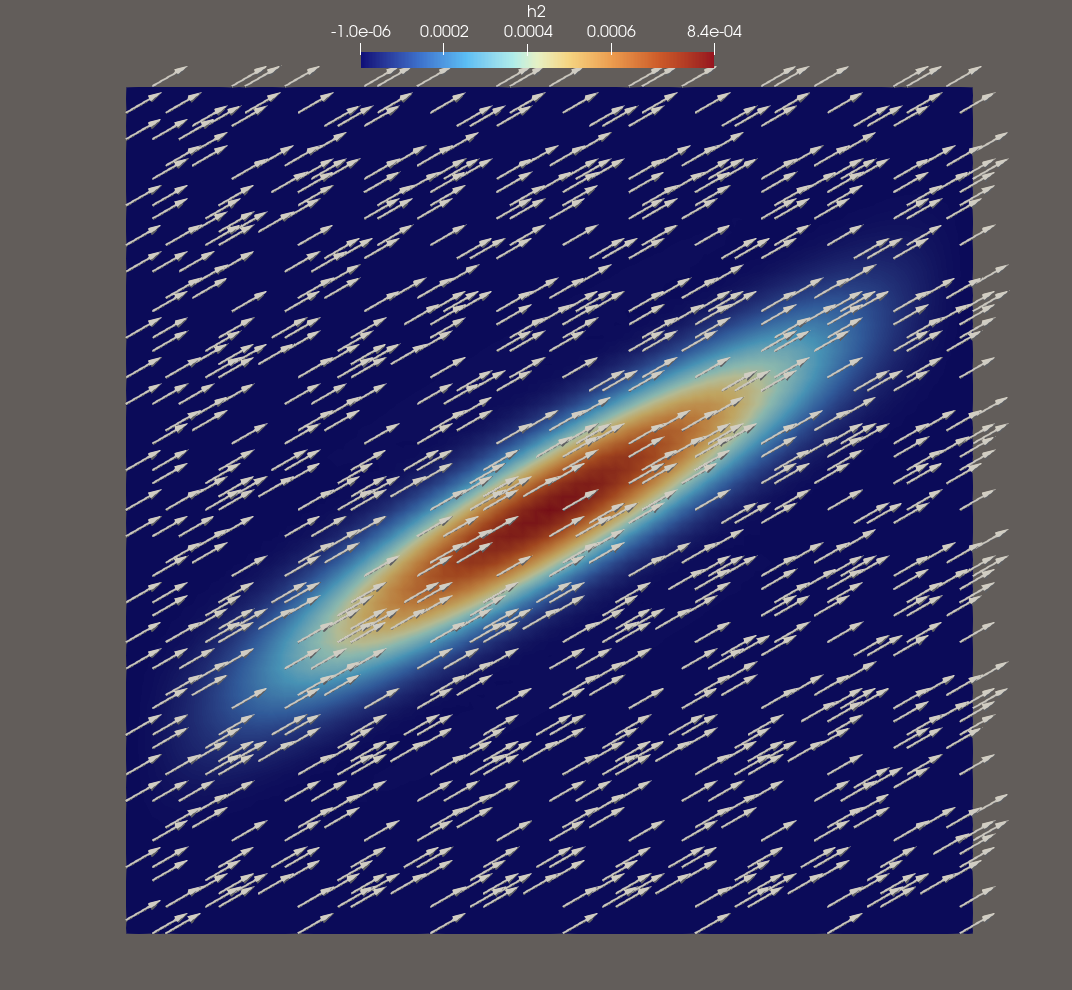}
    \quad
    \includegraphics[width=0.425\linewidth]{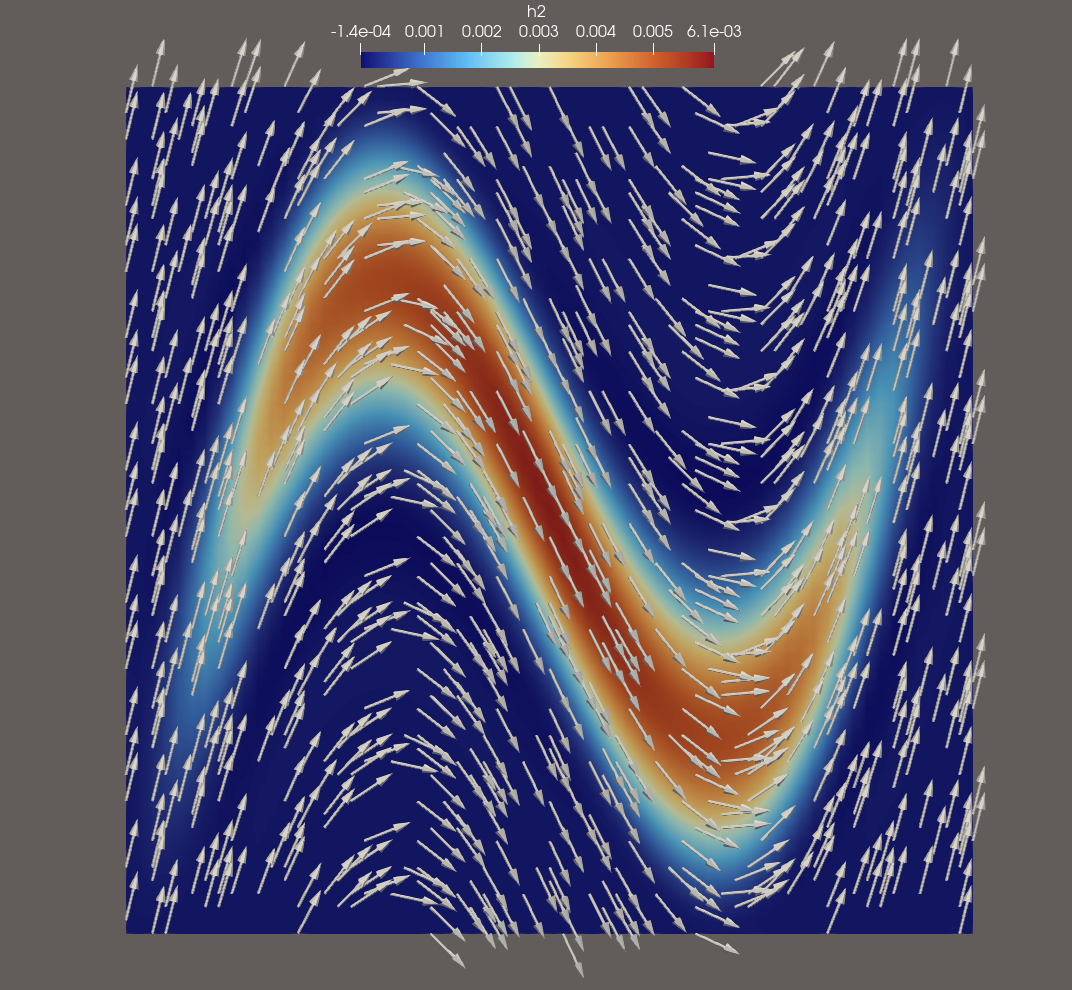}
    \caption{Solutions of the anisotropic biharmonic equation \eqref{eq:h2_strong} in $H^2$, with diffusion tensor \eqref{eq:h2_diffusion_tensor} using an anisotropy ratio \(\epsilon=10^{-3}\), and direction fields \(\bm b (\bm x)\) from \eqref{eq:h2_bfield_cases}.
    Left: constant direction field.  Right: S-curve direction field.
    Overlaid white arrows represent the direction of \(\bm b (\bm x)\).}
    \label{fig:h2_sol}
\end{figure}

\begin{figure}[t!]
    \centering
    \includegraphics[width=0.925\linewidth]{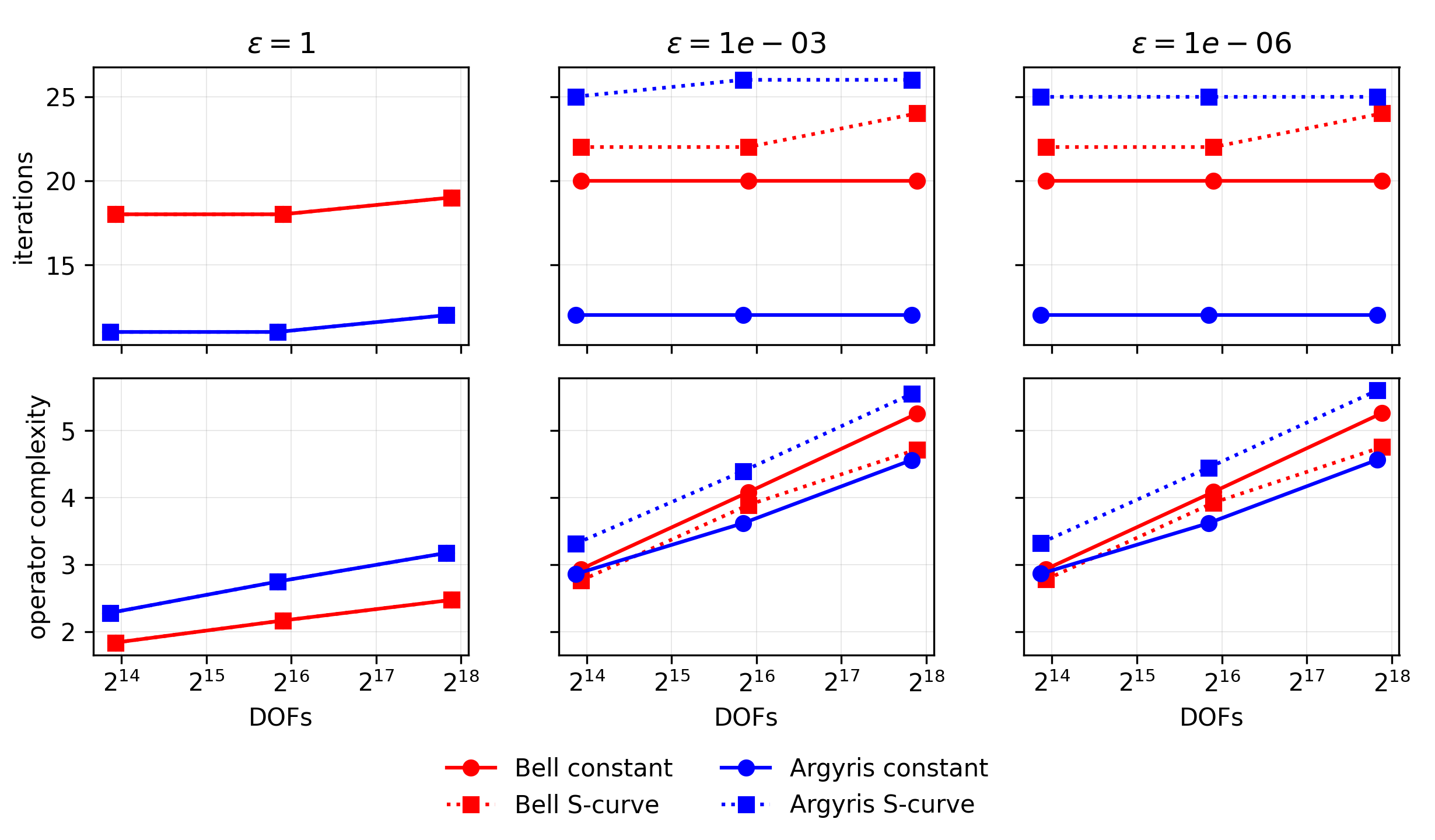}
    \caption{Results for the \(H^2\) problem \eqref{eq:h2_strong} with the two \(\bm b\) fields in
    \eqref{eq:h2_bfield_cases}.  The top row shows iteration counts and the
    bottom row shows operator complexity.}
    \label{fig:h2_results}
\end{figure}

\begin{figure}[b!]
    \centering
    \includegraphics[width=0.85\linewidth]{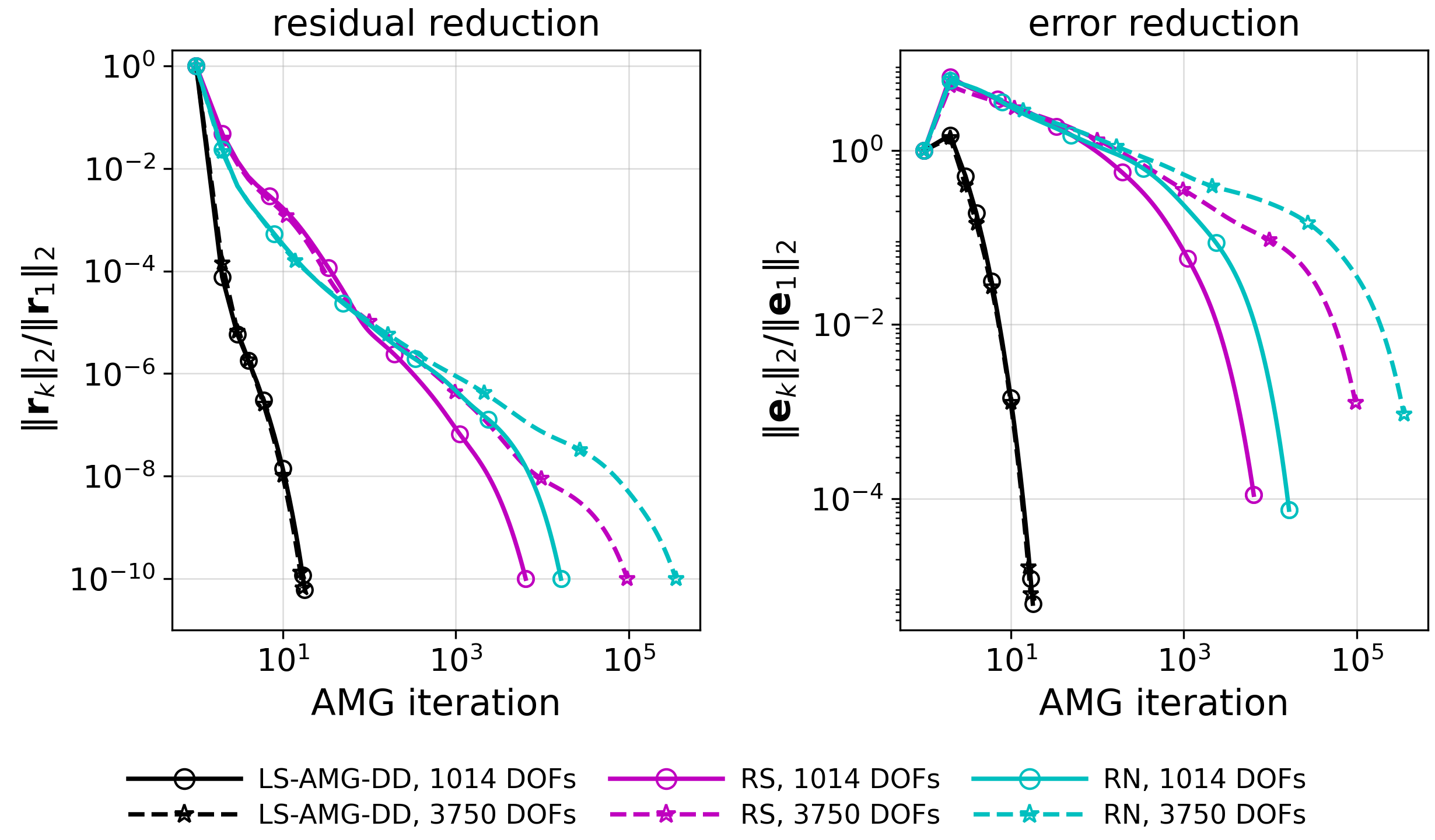}
    \caption{Comparison of LS-AMG-DD with the two baseline AMG solvers for the anisotropic biharmonic problem \eqref{eq:h2_strong} discretized with Bell elements, and with \(\epsilon=10^{-6}\) and the S-curve field \(\bm b (\bm x)\) from \eqref{eq:h2_bfield_cases}.}
\label{fig:h2_pyamg_baseline}
\end{figure}

The numerical experiments use anisotropy ratios in \eqref{eq:h2_diffusion_tensor} of \(\epsilon\in\{1,10^{-3},10^{-6}\}\).
The largest systems here have on the order of \(240\)k DOFs with \(10\)M nonzeros in \(A\).  
The coarsest systems in these runs have at most about 6,000 DOFs.
Solver results are shown in \cref{fig:h2_results}, which show fast, robust convergence across all regimes tested.
The iteration counts are largely constant under mesh refinement, while the
operator complexities grow noticeably, especially in the anisotropic
cases.
For the constant-direction field, iterations are also largely insensitive to anisotropy, seeing near identical results moving from an isotropic problem to highly anisotropic form. For the S-curve, there is some growth as the problem transitions from the isotropic case to
the moderately anisotropic case, \(\epsilon=1\to 10^{-3}\), but no further increase in iterations moving from \(\epsilon=10^{-3} \to 10^{-6}\). 
This behavior is tied to the adaptive threshold \(\tau_{\rm cut}=\max_j {\rm mult}_{\omega}(j)\) from \eqref{eq:tau-cut}, which attempts to ensure that coarse spaces are comparably robust regardless of anisotropy. Larger anisotropy simply requires more coarse basis functions, and therefore drives a higher operator complexity, while the isotropic problem maintains small operator complexities with slow growth under mesh refinement.

Two-level residual and error histories for small example problems are
shown in \cref{fig:h2_pyamg_baseline} to compare LS-AMG-DD against RS and RN AMG. Note that the largest DOF count of 3750 is still significantly smaller than the smallest Bell DOF count of 15606 shown in \cref{fig:h2_results}. First and foremost, we see that even on these very small problems using two-level methods, RS and RN AMG largely fail, taking $\sim 10^4$ and $10^5$ iterations to converge to $10^{-10}$ relative residual for the two problem sizes, respectively, demonstrating intractably poor convergence and serious degradation with mesh refinement. In contrast (and consistent with \cref{fig:h2_pyamg_baseline}), LS-AMG-DD converges rapidly, and identically across the two mesh refinements. Similar to what we saw for grad--div in \cref{fig:hdiv_pyamg_baseline}, despite all methods converging to the same final relative residual, LS-AMG-DD achieves {two orders of magnitude} smaller $\ell^2$ error on the finer problem than RS or RN AMG.

\subsection{Vector \texorpdfstring{\(H^1\)}{H1}: linear elasticity}
\label{subsec:h1_vector_linear_elasticity_problem}

The final model problem is two-dimensional small-strain isotropic linear
elasticity on the cantilever beam \(\mathcal D=(0,4)\times(0,1)\).  Let
\(\bm u:\mathcal D\to\mathbb R^2\) denote the displacement, and define
\begin{equation}
\label{eq:h1_elasticity_strain_stress}
    \varepsilon(\bm u)
    =
    \frac12\left(\grad\bm u+(\grad\bm u)^{\top}\right),
    \qquad
    \sigma(\bm u)
    =
    2\mu\varepsilon(\bm u)
    +
    \lambda(\div\bm u)I .
\end{equation}
Here \(\mu>0\) is the shear modulus and \(\lambda\ge 0\) is the first
Lam\'e parameter.  Let \(\Gamma_{\rm L}\), \(\Gamma_{\rm R}\),
\(\Gamma_{\rm B}\), and \(\Gamma_{\rm T}\) denote the left, right, bottom, and top sides of \(\partial\mathcal D\), respectively.  The strong form of the equation for the displacement \(\bm u\) is
\begin{equation}
\label{eq:h1_elasticity_strong_problem}
\begin{aligned}
    -\div\sigma(\bm u) &= \bm 0
        &&\text{in }\mathcal D, \\
    \bm u &= \bm 0
        &&\text{on }\Gamma_{\rm L}, \\
    \sigma(\bm u)\bm n &= \bm t_{\rm R}
        &&\text{on }\Gamma_{\rm R}, \\
    \sigma(\bm u)\bm n &= \bm 0
        &&\text{on }\Gamma_{\rm B}\cup\Gamma_{\rm T}.
\end{aligned}
\end{equation}

Testing the PDE with \(\bm v\in H^1(\mathcal D)^2\) and integrating by parts gives
\[
    (\sigma(\bm u),\grad\bm v)_{\mathcal D}
    -
    \langle \sigma(\bm u)\bm n,\bm v\rangle_{\partial\mathcal D}
    =
    0.
\]
Here we use the unrestricted conforming space \(V_h\subset H^1(\mathcal D)^2\), and impose the clamp
on \(\Gamma_{\rm L}\) weakly by a symmetric Nitsche formulation.  
Applying the boundary conditions, and noting that \(\sigma(\bm u) \, : \, \grad \bm v = \sigma( \bm u ) \, : \, \varepsilon( \bm v )\), the discrete problem is: find \(\bm u_h\in V_h\) such that
\(
    a_h(\bm u_h,\bm v_h)=\ell_h(\bm v_h)
    \;
    \forall \bm v_h\in V_h,
\)
with
\(
    \ell_h(\bm v_h)
    =
    \langle \bm t_{\rm R},\bm v_h\rangle_{\Gamma_{\rm R}},
\)
and
\begin{align}
\label{eq:h1_vector_elasticity_nitsche_form}
    a_h(\bm u_h,\bm v_h)
    :={}&
    2\mu(\varepsilon(\bm u_h),\varepsilon(\bm v_h))_{\mathcal D}
    +
    \lambda(\div\bm u_h,\div\bm v_h)_{\mathcal D}
    +
    \langle \gamma\bm u_h,\bm v_h\rangle_{\Gamma_{\rm L}}
    \nonumber\\
    &-
    \langle \sigma(\bm u_h)\bm n,\bm v_h\rangle_{\Gamma_{\rm L}}
    -
    \langle \sigma(\bm v_h)\bm n,\bm u_h\rangle_{\Gamma_{\rm L}} .
\end{align}
On each boundary facet \(F\subset\Gamma_{\rm L}\), with diameter \(h_F\), we take
\(
    \gamma_F
    =
    \tfrac{24p^2}{h_F}(\lambda+2\mu).
\)

\begin{figure}[b!]
    \centering
    \includegraphics[width=0.475\linewidth]{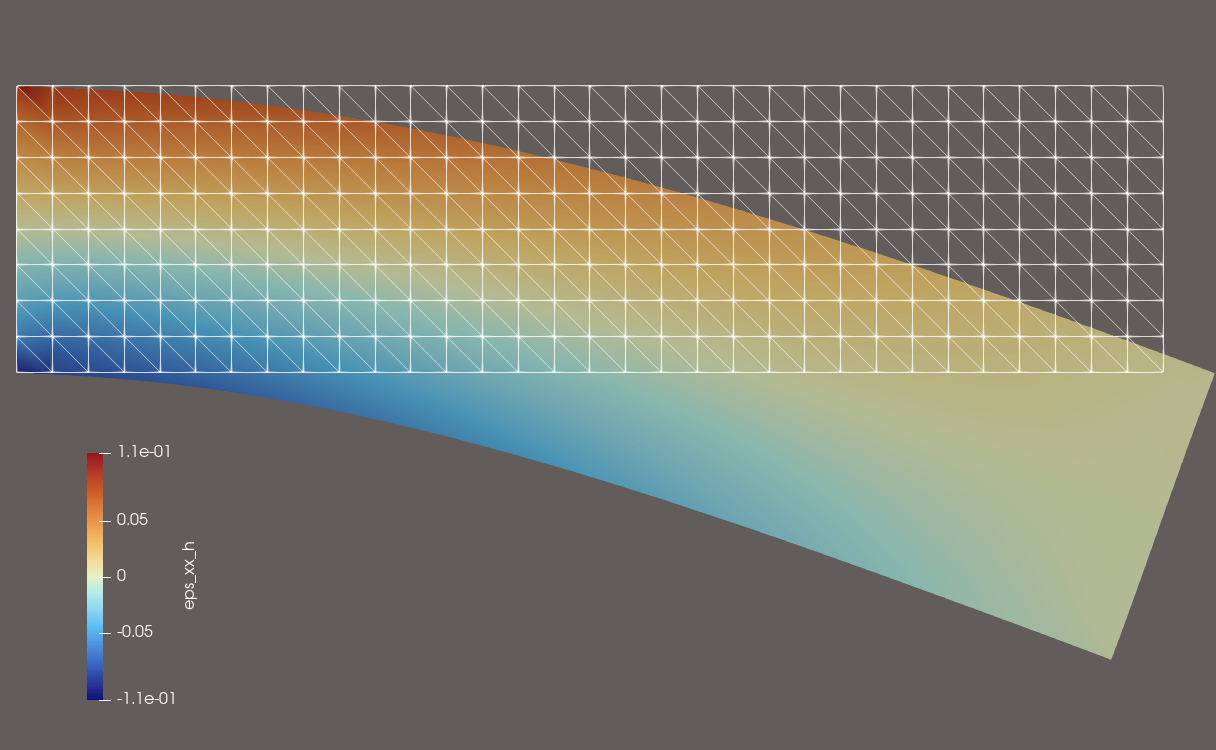}
    \;
    \includegraphics[width=0.475\linewidth]{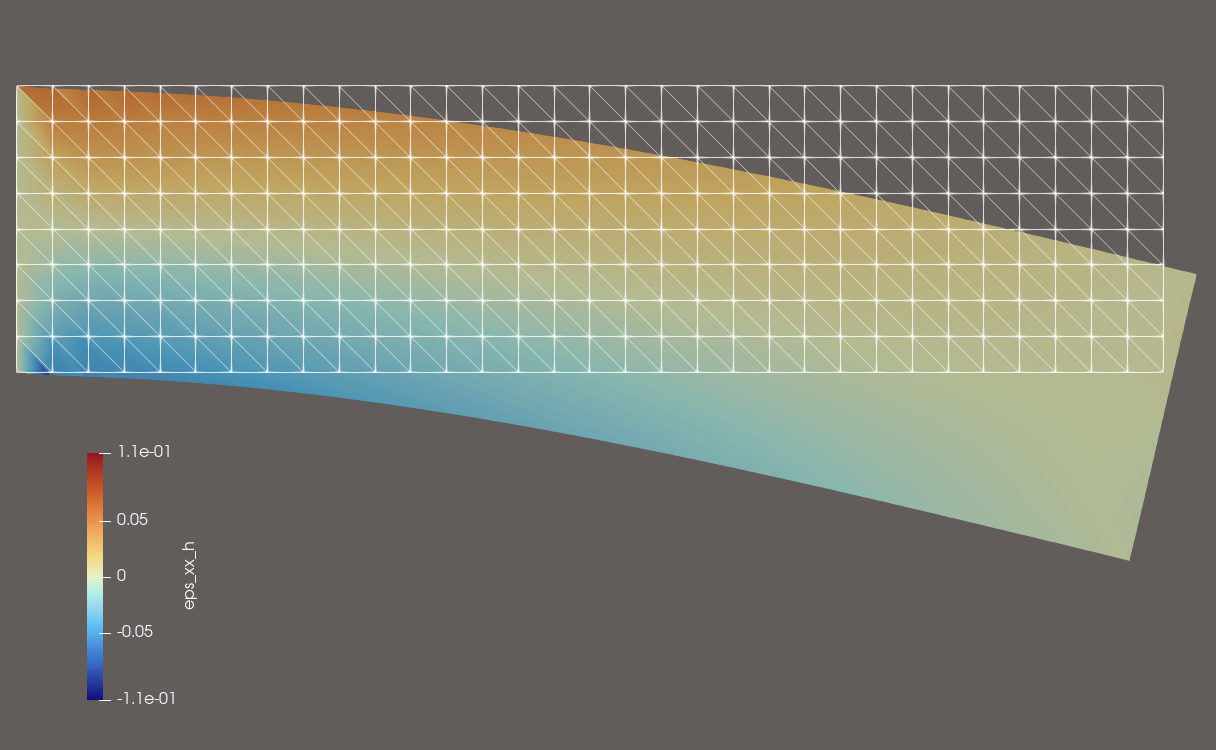}
    \caption{Displaced cantilever-beam solutions for the vector
    \(H^1\) linear elasticity problem \eqref{eq:h1_elasticity_strong_problem}, 
    with the color field showing the axial strain component
    \(\varepsilon_{xx}=\partial u_x/\partial x\).
    The finite-element space is \(V_h=[\mathrm{CG}_p(\mathcal T_h)]^2\) with $p=2$.
    Left: \(\lambda/\mu=1\).
    Right: \(\lambda/\mu=499\).
    }
    \label{fig:elasticity_sol}
\end{figure}

The discretization uses triangular meshes with
\(V_h=[\mathrm{CG}_p(\mathcal T_h)]^2\). Letting \(N\) denote
the number of cells through the beam height, the axial cell count is
\(4N\), matching the \(4:1\) beam aspect ratio.  
The material sweep fixes
\(\mu=1\) and varies
\begin{equation}
\label{eq:h1_elasticity_material_sweep}
    \frac{\lambda}{\mu}\in\{1,49,499\}.
\end{equation}
This gives one moderately compressible case and two increasingly
near-incompressible cases.  Under the usual plane-strain/three-dimensional
Lam\'e relation, these ratios correspond to Poisson ratios
\(\nu=0.25,0.49,0.499\), respectively.  The ratio \(\lambda/\mu\) controls the
strength of the volumetric term relative to the shear term, and the singular
limit \(\lambda/\mu\to\infty\) is the incompressible limit.
Finally, the traction boundary condition in \eqref{eq:h1_elasticity_strong_problem} is taken as \(\bm t_{\rm R} = (0,-T_0)^{\top}\) with \(T_0=10^{-2}\).
\Cref{fig:elasticity_sol} shows the displaced beam for
\(\lambda/\mu=1\) and \(\lambda/\mu=499\).

\begin{figure}[b!]
    \centering
    \includegraphics[width=0.95\linewidth]{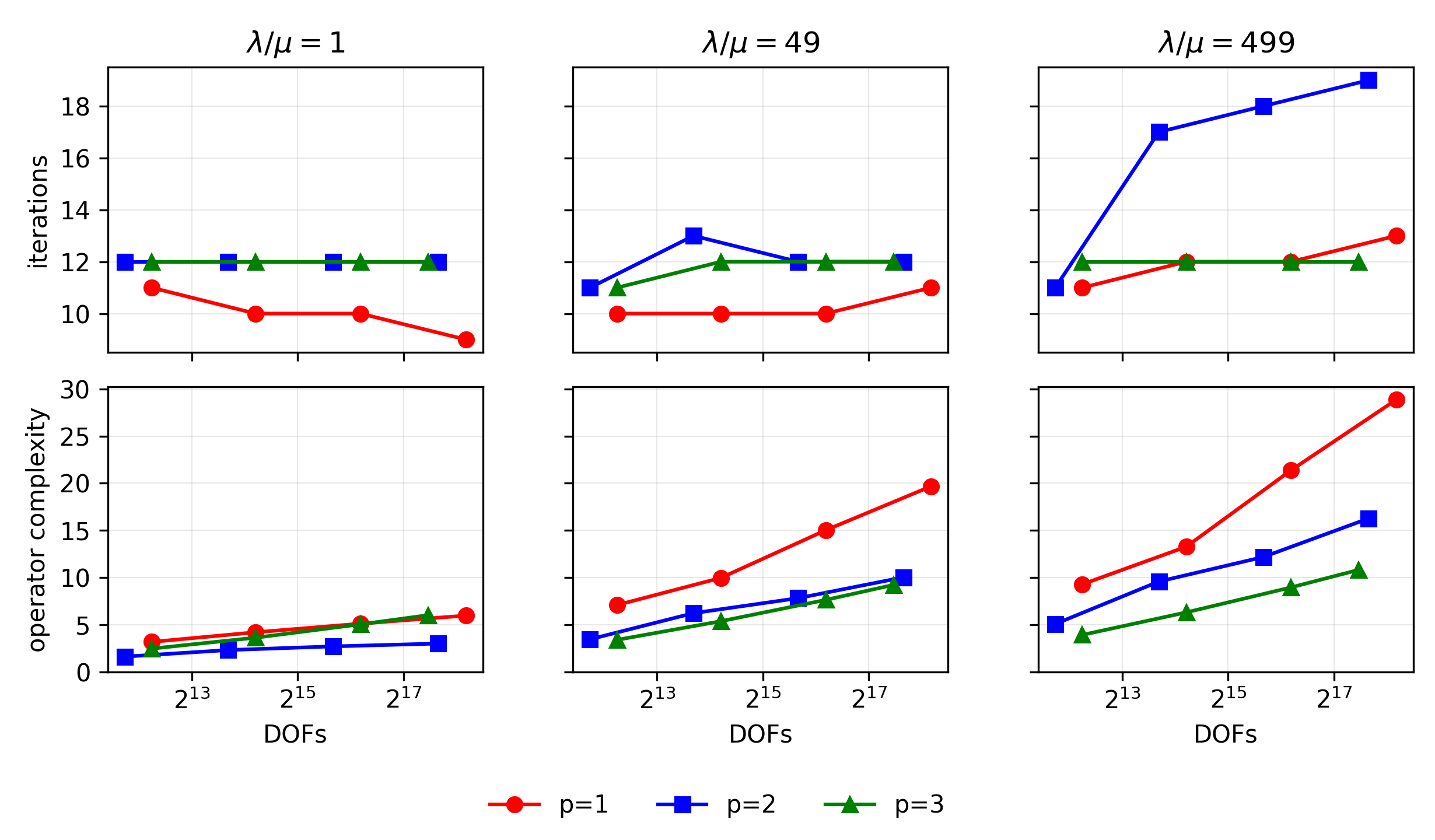}
    \caption{Results for linear elasticity \eqref{eq:h1_elasticity_strong_problem} discretized in \(V_h=[\mathrm{CG}_p(\mathcal T_h)]^2\), with
    \(\mu=1\) and \(\lambda/\mu\in\{1,49,499\}\).  The columns correspond to increasing values of \(\lambda/\mu\).  Top row: iteration count as a function of DOFs.  Bottom row: operator complexity as a function of DOFs.
    }
    \label{fig:h1_vector_results}
\end{figure}

Numerical results are shown in \cref{fig:h1_vector_results} for $p\in\{1,2,3\}$ as a function of mesh refinement.  We use the adaptive threshold
\(\tau_{\rm cut}=0.75 \max_j {\rm mult}_{\omega}(j)\). We find that the constant 0.75 yields more scalable iteration counts in mesh refinement than a value of 1.0. 
The iteration counts increase slowly as
\(\lambda/\mu\) grows, but remain flat under mesh refinement for
each fixed polynomial degree, except for $p=2$ and $\lambda/\mu=499$. In all cases, operator complexity increases with mesh size, especially in
near-incompressible cases and at low order. Interestingly, for $\lambda/\mu > 1$, the highest order we consider, $p=3$, yields the smallest iteration counts and operator complexities. 
This contrasts with usual AMG methods, where higher-order discretizations induce more complex near-nullspace modes and corresponding degradation of convergence. Here the adaptive threshold separates the compressible and
near-incompressible cases effectively, giving smaller coarse spaces for the
compressible case and more enriched coarse spaces when the volumetric term is
dominant.

\cref{fig:h1_vector_pyamg_baseline} presents an error and residual comparison with RS and RN AMG. Even for the easiest case of $\lambda/\mu=1$, RS and RN AMG take more than $10^4$ iterations to converge, with this number exceeding $10^6$ for $\lambda/\mu=499$. LS-AMG-DD is robust across Poisson ratio, converging in 10 iterations for all three cases. As before, LS-AMG-DD also provides significantly higher quality solutions, in this case achieving error more than five orders of magnitude smaller than the solutions of RS and RN AMG, despite converging to the same relative residual.

\begin{figure}[t!]
    \centering
    \includegraphics[width=0.85\linewidth]{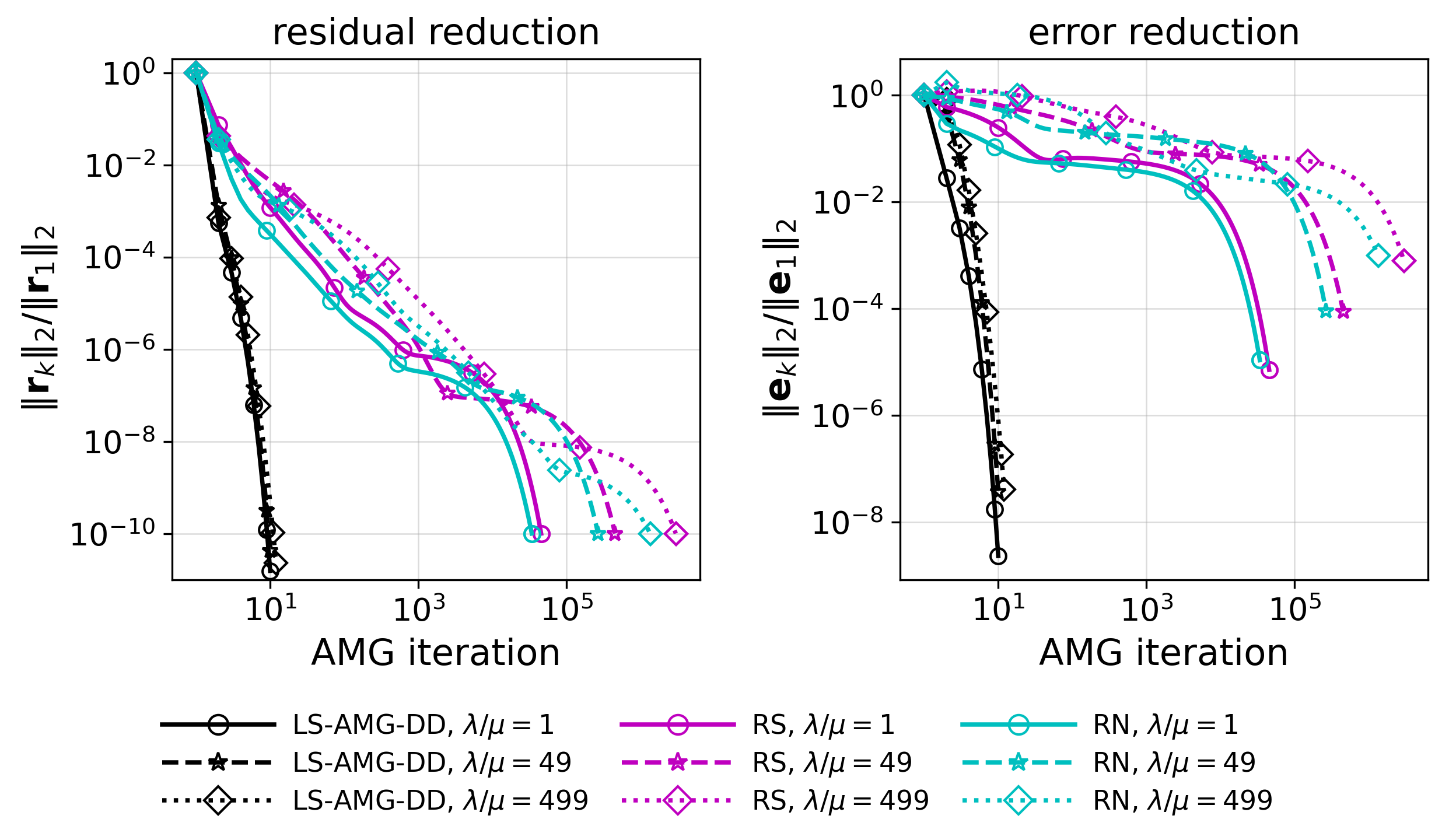}
    \caption{Comparison of LS-AMG-DD with the two baseline AMG solvers for the linear elasticity problem \eqref{eq:h1_elasticity_strong_problem} discretized in \(V_h=[\mathrm{CG}_p(\mathcal T_h)]^2\), $p=2$. The number of DOFs is 594.}
    \label{fig:h1_vector_pyamg_baseline}
\end{figure}

\section{Conclusions}
\label{sec:conclusions}

In this paper we showed that conforming finite-element discretizations can naturally be expressed as a sparse Gram matrix $A = G^\top G$, exactly the structure required by the multilevel algebraic solver LS-AMG-DD \cite{southworth2026lsamgdd}. On a prescribed DOF cover, we proved that the existence of a cover-local Gram form \(A=G^{\top}G\) is equivalent to the existence of an exact cover-local SPSD splitting of $A$. For conforming finite-element discretizations, such splittings arise directly from standard elementwise assembly.  Factoring local element matrices and stacking the resulting factors therefore gives an element-local Gram representation of the assembled operator.

We then couple this problem formulation with the LS-AMG-DD solver and a proposed local eigenvalue threshold as a robust black-box multilevel solver for conforming finite-element discretizations, beyond scalar \(H^1\) systems.  Across \(H(\div)\) grad--div, scalar \(H^2\) anisotropic hyperdiffusion, and vector \(H^1\) elasticity examples, the proposed solver framework provides fast and robust convergence (without Krylov acceleration) under mesh refinement, increasing polynomial degree, and difficult physical parameter regimes, where standard AMG baselines fail to converge in $10^5$ or more iterations. Moreover, the importance of a good preconditioner is apparent in error plots for the hardest/most ill-conditioned realizations of each problem. Although all methods are stopped at the same residual tolerance, LS-AMG-DD yields a final error that is 2--5 orders of magnitude smaller than the other methods because it uniformly attenuating error associated with all modes. Future work will focus on complexity control, because the same spectral enrichment that gives robustness can also produce large coarse spaces and significant fill-in on coarser grids.

\section*{Acknowledgments}
The authors used OpenAI's ChatGPT during the preparation of this manuscript, including for exploratory mathematical discussion, coding, and drafting of the exposition.
All mathematical claims, proofs, computations, citations, and final wording were reviewed, verified, and edited by the authors, who take full responsibility for the manuscript. We would like to thank Hussam Al Daas for useful conversations regarding spectral DD, as well as David Ham and Firedrake contributors for useful suggestions regarding broken spaces and unassembled matrices in Firedrake.

\bibliographystyle{siamplain}
\bibliography{cfem-solvers-refs}

\appendix

\section{Constructing the Gram factor in Firedrake}
\label{subsec:numerical_gram_construction}
The numerical experiments assemble finite-element forms in Firedrake
\cite{FiredrakeUserManual}, but Firedrake's public assembly interface does not expose element matrices. Instead, we use an equivalent broken-space construction to recover the same element-local information. Let \(V_h\) denote the conforming finite-element space and let
\(V_h^{\rm br}\) denote the broken companion space obtained from the same
local element. The conforming matrix \(A\) is obtained by assembling \(a_h(\cdot,\cdot)\)
on \(V_h\).  The same UFL form is then assembled on \(V_h^{\rm br}\), giving
a broken-space matrix \(A_{\rm br}\), with element-local DOFs and
block diagonal matrix \(A_{\rm br}\).
Boundary-facet terms in \(a_h\), when present, are included in the local blocks associated with boundary elements.
The element-local blocks of \(A_{\rm br}\) are factored by converting each block to a dense array and computing a right square-root by eigendecomposition.  We denote the resulting block-diagonal square-root  by
\(G_{\rm br}\), so that
\(
    A_{\rm br}=G_{\rm br}^{\top}G_{\rm br}.
\)
 Let
\(
    P_{\rm br}:V_h\to V_h^{\rm br}
\)
denote the map that takes a conforming coefficient vector to the
broken-space coefficient vector representing the same cellwise finite
element field. The rectangular Gram factor supplied to LS-AMG-DD is
\(
    G=G_{\rm br}P_{\rm br}
\)
where
\(
    G^{\top}G
    =
    P_{\rm br}^{\top}A_{\rm br}P_{\rm br}.
\)
In the Firedrake implementation, this broken-space approach  requires essential boundary conditions to be imposed weakly for consistency between \(V_h\) and \(V_h^{\rm br}\).

\end{document}